\documentclass[12pt,letterpaper]{amsart}

\usepackage{amsmath,amssymb,amsfonts,amsthm}
\usepackage{mathrsfs}
\usepackage{enumerate}
\usepackage{xcolor}

\newcommand{\MCchange}[2]{{\color{blue}#1}{\color{red}#2}}

\usepackage{hyperref}

\makeatletter

\renewcommand\subsection{%
  \@ifstar{\subsection@star}{\subsection@nostar}%
}

\newcommand{\subsection@nostar}[1]{%
  \par\addvspace{.70\baselineskip}%
  \refstepcounter{subsection}%
  \addcontentsline{toc}{subsection}{\protect\numberline{\thesubsection}#1}%
  \noindent{\normalfont\normalsize \thesubsection.\quad{\bfseries #1.}}%
  \par\nobreak\vspace{.05\baselineskip}%
  \@afterindentfalse\@afterheading
}

\newcommand{\subsection@star}[1]{%
  \par\addvspace{.70\baselineskip}%
  \noindent{\normalfont\normalsize{\bfseries #1.}}%
  \par\nobreak\vspace{.05\baselineskip}%
  \@afterindentfalse\@afterheading
}

\renewcommand\subsubsection{%
  \@ifstar{\subsubsection@star}{\subsubsection@nostar}%
}

\newcommand{\subsubsection@nostar}[1]{%
  \par\addvspace{.55\baselineskip}%
  \refstepcounter{subsubsection}%
  \addcontentsline{toc}{subsubsection}{\protect\numberline{\thesubsubsection}#1}%
  \noindent{\normalfont\normalsize \thesubsubsection.\quad{\bfseries #1.}}%
  \par\nobreak\vspace{.05\baselineskip}%
  \@afterindentfalse\@afterheading
}

\newcommand{\subsubsection@star}[1]{%
  \par\addvspace{.55\baselineskip}%
  \noindent{\normalfont\normalsize{\bfseries #1.}}%
  \par\nobreak\vspace{.05\baselineskip}%
  \@afterindentfalse\@afterheading
}

\makeatother

\numberwithin{equation}{section}
\newtheorem{theorem}{Theorem}[section]
\newtheorem{lemma}[theorem]{Lemma}
\newtheorem{proposition}[theorem]{Proposition}
\newtheorem{corollary}[theorem]{Corollary}
\newtheorem{hypothesis}[theorem]{Hypothesis}
\newtheorem{characterisation}[theorem]{Characterisation}

\theoremstyle{definition}

\newtheorem{definition}[theorem]{Definition}
\newtheorem{example}[theorem]{Example}

\theoremstyle{remark}
\newtheorem{remark}[theorem]{Remark}

\newcommand{\R}{\mathbb R}
\newcommand{\C}{\mathbb C}
\newcommand{\N}{\mathbb N}
\newcommand{\cQ}{\mathcal Q}
\newcommand{\cG}{\mathcal G}

\newcommand{\Dom}{\operatorname{Dom}}
\newcommand{\Ker}{\operatorname{Ker}}

\newcommand{\norm}[1]{\left\lVert #1\right\rVert}

\newcommand{\LL}{\mathcal{L}}
\newcommand{\inner}[2]{\left\langle #1,#2\right\rangle}
\newcommand{\F}{\mathcal{F}}

\newcommand{\PP}{\mathcal{P}}

\title[Generic simplicity for self-adjoint operators]{%
Generic simplicity for self-adjoint operators under bounded potential perturbations}

\author[MARIANNA CHATZAKOU AND BERNARD HELFFER]{%
MARIANNA CHATZAKOU AND BERNARD HELFFER}

\thanks{The first author is a postdoctoral fellow of the Research Foundation
\MCchange{–}{--} Flanders (FWO) under the postdoctoral grant No 1210226N}

\address{Department of Mathematics: Analysis, Logic and Discrete Mathematics, Ghent University, 9000, Ghent, Belgium}
\email{marianna.chatzakou@ugent.be}

\address{Laboratoire de Math\'ematiques Jean Leray, CNRS, Nantes Universit\'e, F44000. Nantes, France}
\email{Bernard.Helffer@univ-nantes.fr}

\begin{document}

\begin{abstract}
We are interested in the generic simplicity of the spectrum of self-adjoint
operators under bounded potential perturbations. More precisely, given a
semibounded self-adjoint operator with compact resolvent and a suitable
space of real-valued bounded perturbations, we study whether all eigenvalues
of the perturbed operator are simple for a generic choice of the potential.
In the first part of this paper we prove an abstract criterion which ensures
that the set of perturbations giving only simple eigenvalues is residual. In the second part, we apply this criterion to several geometric and analytic
settings, including sub-Laplacians and maximally hypoelliptic operators on
compact manifolds, Laplacians on bounded domains with different boundary
conditions, and Schr\"odinger-type operators on non-compact spaces.
\end{abstract}

\subjclass[2020]{Primary 47A75, 49R05; Secondary 35P05, 35H10, 35H20, 47A55, 58J50.}

\keywords{Generic simplicity; simple eigenvalues; self-adjoint operators; bounded potential perturbations;  analytic perturbation theory; sub-Laplacians; Courant's theorem.}

\maketitle

\section{Introduction}\label{sec:introduction}

\subsection{Motivation and abstract strategy}
The question whether the spectrum of an operator is simple after a generic zeroth-order perturbation is a classical one.  In the elliptic case on compact manifolds it is closely related to the work of Albert and Uhlenbeck on generic properties of eigenfunctions \cite{Alb71,Alb73,Alb75,Alb78,U}. In particular, the work of Uhlenbeck \cite{U} makes use of many techniques that are not available for all the models considered here.  In that setting one usually perturbs an elliptic operator by a smooth real potential and proves that, for a residual set of such potentials, all eigenvalues are simple.  Once simplicity is known, several nodal consequences can then be proved.  For example, the usual variational proof of Courant's nodal theorem gives the sharp bound for each eigenfunction when the corresponding eigenvalue is simple.  This viewpoint is also useful in the sub-Riemannian setting, where   Pleijel and Courant type results for sub-Laplacians are considered, see \cite{FHCourant, EL, Al, FHPleijel}. 
The present paper isolates the  perturbative mechanism behind generic simplicity.  We do not start from an elliptic operator, and we do not use smoothness of eigenfunctions in the proof of the abstract theorem.  The base operator $L$ is only assumed to be self-adjoint on \(L^2(X)\), semibounded from below, and with compact resolvent.  The perturbations are real bounded multiplication operators.  More precisely, they are taken from an admissible real Fr\'echet space \(\mathcal Q\), see Definition~\ref{def:admissible}, continuously embedded in \(L^\infty(X,\mathbb R)\).  For \(q\in\mathcal Q\) we write
\[
        L_q=L+M_q,
\]
and we ask whether the set of \(q\)'s for which all eigenvalues of \(L_q\) are simple is residual in \(\mathcal Q\).

The answer is affirmative under a finite-dimensional splitting condition on the eigenspaces.  More precisely, if \(\lambda\) is a multiple eigenvalue of \(L_q\) and \(E_q=\ker(L_q-\lambda I)\), we assume that there is a perturbation \(\sigma\in\mathcal Q\) such that the finite-dimensional operator $G_\sigma:=(P_{E_{q}}M_\sigma)|_{E_q}$, where $P_{E_{q}}:L^2(X) \rightarrow E_q$ is the orthogonal projection on $E_q$ with respect to the $L^2(X)$ inner product, and $M_\sigma$ is the multiplication by $\sigma$,  satisfies the hypothesis 
\begin{equation}
G_\sigma:=(P_{E_q}M_\sigma)|_{E_q}:E_q\to E_q,
\qquad
G_\sigma\notin \mathbb{R}I_{E_q}.
\tag{\normalfont H$_{q,\lambda,\sigma}$}
\end{equation}

The point of the paper is that  if the condition \(C_c^\infty(X^\circ,\mathbb R)\subset\mathcal Q\) is satisfied,  then no unique-continuation argument is needed to find such a \(\sigma\). In particular, this permits to include the case of sub-Laplacians with $C^{\infty}(M)$ potentials.  The perturbative part of the proof is close to the idea of Albert \cite{Alb75} based on Kato's analytic perturbation theory \cite{Kato}. We point out that, in contrast to \cite{Alb75}, the existence of a perturbation
\(\sigma \in C_c^\infty(X^\circ,\mathbb{R})\) such that
\(G_\sigma \notin \mathbb{R}I_{E_q}\) can be established without assuming the
smoothness of the associated eigenfunctions; it follows from a simple
contradiction argument based on the Cauchy--Schwarz inequality. Finally, we point out that Uhlenbeck's work \cite{U} also considers properties of eigenfunctions that do
not fit into the current framework.

\subsection{Relation with known results}
Our initial goal was to extend the elliptic results of \cite{Alb75}.  In particular, we cover the cases of  smooth perturbations on compact manifolds, bounded perturbations on possibly nonsmooth bounded domains, and bounded smooth perturbations in non-compact examples for which the unperturbed operator already has compact resolvent. A further point of the present approach, in contrast with the classical elliptic perturbation arguments of Albert \cite{Alb75}, is that the finite-dimensional splitting argument is carried out directly in the complex Hilbert space \(L^{2}(X,\mathbb C)\), and does not require the eigenspaces of the operators under consideration to be spanned by real-valued eigenfunctions. About the assumption on the compact resolvent, we note that this is part of the abstract theorem: bounded potentials preserve the compactness of the resolvent, but in general, they do not create it; see  for instance the case of the harmonic oscillator. We also mention the work of Luzzini and Zaccaron~\cite{LZ}, who studied
Maxwell eigenvalues in electromagnetic cavities under variations of the
electric permittivity and proved, among other sensitivity results, generic
simplicity of the positive Maxwell spectrum with respect to matrix-valued
permittivity perturbations.

The recent work of Yang and Guo \cite{YangGuo2026} studies a degenerate elliptic Dirichlet problem of the form
\[
        -\operatorname{div}(w\nabla u)=\lambda u,
\]
where the weight is positive inside the domain and degenerates on part of the boundary.  Their paper proves a Courant nodal-domain theorem in that setting and also obtains a residual simplicity statement for bounded potentials.   We note that the  Dirichlet realization of \(-\operatorname{div}(w\nabla)\), under their compact-embedding assumption on the weighted form domain, is a self-adjoint semibounded operator with compact resolvent; the perturbations \(\rho\in L^\infty(\Omega)\) are precisely bounded multiplication perturbations.  Thus their residual simplicity theorem fits into the present framework with \(X=\Omega\), \(L=-\operatorname{div}(w\nabla)\), and \(\mathcal Q=L^\infty(\Omega,\mathbb R)\).   Our proof separates the abstract spectral perturbation argument from the operator-specific questions, such as the description of the form domain, compactness, regularity, and unique continuation. Regarding the study of the  nodal domains of the aforesaid operator, the authors of    \cite{YangGuo2026} make use of the fact that nodal domains are  linked to    the variational theory of
spectral minimal partitions as studied  in the  work of Helffer,
Hoffmann-Ostenhof and Terracini \cite{HHOT}.

An important class of operators to which our abstract theorem applies is that of sub-Riemannian Laplacians $\mathcal L_{\rm sub}$.   For such operators,  associated with vector fields satisfying H\"ormander's condition, hypoellipticity and sub-elliptic estimates go back to H\"ormander \cite{Hormander67}, while optimal estimates were considered by Rothschild--Stein \cite{RothschildStein}. Related spectral control questions for degenerate sub-elliptic models were recently studied by Harakeh and Hillairet for Baouendi--Grushin type operators in  \cite{HarakehHillairet}.  In the  sub-Riemannian context, the proof of the Pleijel-type bounds of Frank and Helffer uses nilpotent approximation, Weyl asymptotics and Faber--Krahn inequalities \cite{FHCourant} in the equi-regular case of rank $2$.  Our paper uses their restriction theorem \cite[Theorem~2.2]{FHCourant} in one application to Courant's theorem for sub-Laplacians, proving in particular that the set of potentials $q \in C^\infty$ for which the operator $\mathcal L_{\rm sub}+q$ satisfies Courant's nodal theorem is residual.

Another class of examples comes from polynomials of vector fields, which are maximally hypoelliptic operators.  A characterisation of these operators was conjectured by Helffer and Nourrigat in 1979, see  \cite{HN79,HN85} and finally was proven in 2022 by Androulidakis, Mohsen and Yuncken \cite{AMY}. Although it is not necessary, we follow their presentation here which involves a calculus adapted to filtered foliation (see also the recent abstract $C^*$-algebraic approach of Mohsen~\cite{Mohsen2026}). 

Several further classes of self-adjoint Schr\"odinger and magnetic Schr\"odinger operators on \(\mathbb R^d\) with compact resolvent are furnished by the classical criteria and examples in \cite{Simon1983,HelfferMohamed1988,Iwatsuka1986,KondratievShubin2002,Shen1995,Guibourg1993,HelfferNourrigat2019}. 
Whenever these results give a self-adjoint, semibounded operator \(L\) with compact resolvent, our abstract theorem applies to \(L+M_q\) for bounded real perturbations \(q\in \mathcal{Q}\), for instance when \(C_c^\infty(\mathbb R^d,\mathbb R)\subset \mathcal{Q}\subset L^\infty(\mathbb R^d,\mathbb R)\).

In \cite{U}, K. Uhlenbeck also considers metric perturbations for Laplacians, and we hope to extend this
result to the sub-Riemannian case \cite{CH1}.

\subsection{Main theorem and applications}

In the sequel we denote by $L: \Dom(L)\subset H \rightarrow H$, where $H$ is a Hilbert space, the symmetric operator with domain $\Dom(L)$. We say that $L$ is semibounded (from below) if there exists $C \in \mathbb R$ such that, for all $u \in \Dom(L)$
\[
\inner{Lu}{u}\ge C \norm{u}^2\,.
\]
 In this case we simply write $L \geq C$.

\noindent More precisely, in the sequel we  assume that for every \(q\in\mathcal Q\) in some suitable function space $\mathcal{Q}$, the operator  \(L_q:=L+M_q\), where $M_q$ stands for the multiplication operator by $q$, is
self-adjoint, semibounded, and has compact resolvent. We denote by
\[
\lambda_1(q)\le \lambda_2(q)\le \cdots
\]
the eigenvalues of \(L_q\), counted with multiplicity and arranged in
non-decreasing order.

The main theorem is Theorem~\ref{thm:abstract}.  It states that, if \(L\) is self-adjoint, semibounded, has compact resolvent, and the pair \((L,\mathcal Q)\) satisfies Hypothesis~\textup{(H)}, i.e. for every $q\in\mathcal Q$ and every multiple eigenvalue $\lambda\in\sigma(L_q)$ there exists $\sigma\in\mathcal Q$ such that \textup{(H$_{q,\lambda,\sigma}$)} holds, then
\[
        \mathcal G_{\mathcal Q}
        =\{q\in\mathcal Q:\text{ all eigenvalues of } L+M_q\text{ are simple}\}
\]
is residual in \(\mathcal Q\).  Since \(\mathcal Q\) is a Fr\'echet space, it is completely metrizable; hence it is a Baire space by the Baire category theorem for complete metric spaces, and the residual set is dense.

We also discuss the finite-dimensional splitting criterion when perturbations are required to be supported in a fixed open set \(\omega\subset X^\circ\).  In that case, the criterion is the non-vanishing of the restrictions of the eigenspace to \(\omega\), and a sufficient condition is the weak unique-continuation property from \(\omega\).  This localisation is not needed for the global residual theorem when all compactly supported perturbations are available, but it is useful when one wants to prescribe where the perturbation is made.

The applications are collected in Section~\ref{sec:applications}.  On compact manifolds without boundary we treat sub-Laplacians and maximally hypoelliptic operators.  For sub-Laplacians, generic simplicity implies a residual form of Courant's nodal theorem for the perturbed operator.  On bounded domains we include Dirichlet and Neumann Laplacians on Lipschitz domains, Robin Laplacians on arbitrary open sets of finite measure in the sense of Arendt and Warma \cite{ArendtWarma}, and magnetic Schr\"odinger realizations. We also include the case of operators which are sums of squares of H\"ormander's vector fields with continuous uniformly
elliptic coefficient matrices, and the case of weighted divergence-form elliptic operators on bounded
domains.  On non-compact spaces we include harmonic and anharmonic oscillators and magnetic Schr\"odinger operators, where the base operator has compact resolvent.

\section{Abstract framework}\label{sec:abstract}

Throughout this section, $X$ is a smooth manifold, possibly with boundary, endowed with a fixed positive smooth density $d\mu$, and
\[
L^2(X):=L^2(X,\C; d\mu).
\]
When $X$ has boundary, $X^\circ$ denotes its interior.

\subsection{Conditions and preliminary results}
\begin{definition}[Admissible perturbation space]\label{def:admissible}
 An admissible perturbation space on \(X\) is a real Fr\'echet space
\((\mathcal Q,\tau_{\mathcal Q})\) together with a continuous injective linear map
\[
\iota:\mathcal Q\longrightarrow L^\infty(X,\mathbb R).
\]
By abuse of notation, we identify \(q\in\mathcal Q\) with its image
\(\iota(q)\in L^\infty(X,\mathbb R)\).
All topological notions on \(\mathcal Q\) are understood with
respect to the Fr\'echet topology \(\tau_{\mathcal Q}\).
\end{definition}

\begin{hypothesis}\label{def:splitting}
Let \(L:\Dom(L)\subset L^2(X)\to L^2(X)\) be self-adjoint, let
\(\mathcal Q\) be an admissible perturbation space, and for \(q\in\mathcal Q\)
write
\[
L_q:=L+M_q.
\]
For \(q\in\mathcal Q\) and for an eigenvalue $\lambda$ of $L_q$, we set
\[
E_{q,\lambda}:=\Ker(L_q-\lambda I).
\]
We say that the pair \((L,\mathcal Q)\) satisfies Hypothesis~\textup{(H)} if,
for every \(q\in\mathcal Q\) and every eigenvalue \(\lambda\) such
that \(\dim E_{q,\lambda}\ge2\), there exists \(\sigma\in\mathcal Q\) such that the operator $G_{\sigma}$ satisfies \textup{(H$_{q,\lambda,\sigma}$)}.
\end{hypothesis}

\begin{proposition}
\label{prop:automatic-splitting}
Let $E\subset L^2(X)$ with $\dim E\ge 2$. Then, there exists \(\sigma\in C_c^{\infty}(X^\circ,\R)\) such that 
\[
\forall c \in \R \qquad  G_\sigma:=(P_E M_\sigma)|_E:E\to E \neq cI_E.
\]
\end{proposition}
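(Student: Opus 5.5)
We argue by contradiction, as announced in the introduction, via the Cauchy--Schwarz inequality. Suppose that for every \(\sigma\in C_c^\infty(X^\circ,\R)\) there is \(c(\sigma)\in\R\) with \(G_\sigma=c(\sigma)I_E\). Since \(P_E\) is the orthogonal projection, for \(u,v\in E\) we have
\[
\inner{G_\sigma u}{v}=\inner{\sigma u}{v}=\int_X \sigma\, u\,\bar v\,d\mu .
\]
So the assumption says that for all \(u,v\in E\) and all test functions \(\sigma\),
\[
\int_X \sigma\, u\bar v\,d\mu = c(\sigma)\inner{u}{v}.
\]
Here we tacitly take \(E\) finite-dimensional (closed), as in the application \(E=E_{q,\lambda}\), so that \(P_E\) makes sense. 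We will only use two vectors from \(E\) in any case.

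\textbf{Orthonormal pair.} Choose orthonormal \(u,v\in E\), which is possible since \(\dim E\ge2\).
\begin{itemize}
\item Taking the pair \((u,v)\) gives \(\int\sigma u\bar v\,d\mu=0\) for all real \(\sigma\in C_c^\infty(X^\circ)\). Splitting into real and imaginary parts, complex test functions are allowed too, so the \(L^1_{\rm loc}\) function \(u\bar v\) vanishes as a distribution on \(X^\circ\). Hence \(u\bar v=0\) a.e. on \(X^\circ\).
\item Taking the pairs \((u,u)\) and \((v,v)\) gives \(\int\sigma(|u|^2-|v|^2)\,d\mu=c(\sigma)-c(\sigma)=0\). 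Hence \(|u|^2=|v|^2\) a.e. on \(X^\circ\).
\end{itemize}
Combining the two, \(|u|^4=|u|^2|v|^2=|u\bar v|^2=0\) a.e. on \(X^\circ\), so \(u=0\) a.e. on \(X^\circ\).

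\textbf{Conclusion.} Since \(X\) is a smooth manifold with boundary and \(d\mu\) is a smooth positive density, the boundary \(\partial X\) has \(\mu\)-measure zero. Therefore \(u=0\) in \(L^2(X)\), which contradicts \(\norm{u}=1\). This proves the proposition.

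\textbf{Remark on the obstacle.} No single step is hard. The one point needing care is passing from real-valued test functions to the a.e. identities. This is handled by noting that \(u\bar v\) and \(|u|^2-|v|^2\) lie in \(L^1_{\rm loc}(X^\circ)\) (indeed in \(L^1\)), and applying the fundamental lemma of the calculus of variations to their real and imaginary parts separately. We also use that \(X\setminus X^\circ\) is \(\mu\)-null.

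A variant avoids the a.e. product argument. From \(|u|^2=|v|^2\) and \(\inner{\sigma u}{v}=0\), take \(\sigma\) approximating the indicator function of a set where \(|u|>0\). Cauchy--Schwarz then gives the same contradiction. The direct pointwise argument above is cleaner, though.
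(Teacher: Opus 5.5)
Your proof is correct and follows the same route as the paper. You take an orthonormal pair $u,v\in E$, use the off-diagonal and diagonal entries of $G_\sigma$ to get $u\bar v=0$ and $|u|^2=|v|^2$ a.e., and conclude $u=0$, contradicting $\norm{u}=1$. You also make explicit three points the paper leaves implicit: real versus complex test functions, that $E$ must be closed for $P_E$ to be defined, and that $X\setminus X^\circ$ is $\mu$-null.
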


\begin{proof}
Assume by contradiction that for every \(\sigma\in C_c^{\infty}(X^\circ,\R)\) one has $G_\sigma=c_\sigma I_E$
for some \(c_\sigma\in \mathbb R\).
Choose orthonormal vectors $\varphi,\psi\in E$.
Then, for every \(\sigma\in C_c^{\infty}(X^\circ,\R)\), we have
\[
0
=
\langle G_\sigma\varphi,\psi\rangle_{L^2}
=
\langle M_\sigma\varphi,\psi\rangle_{L^2}
=
\int_X \sigma\,\varphi\,\overline{\psi}\,d\mu.
\]
Since by Cauchy--Schwarz $\varphi\,\overline{\psi}\in L^1(X,\mathbb C)$, by the above it follows that
\[
\varphi\,\overline{\psi}=0
\qquad\text{a.e. on }X.
\]
On the other hand, because \(G_\sigma=c_\sigma I_E\), we have
\[
\langle G_\sigma\varphi,\varphi\rangle_{L^2}
=
\langle G_\sigma\psi,\psi\rangle_{L^2}
\qquad\text{for every }\sigma\in C_c^{\infty}(X^\circ,\R),
\]
implying that 
\[
\int_X \sigma\bigl(|\varphi|^2-|\psi|^2\bigr)\,d\mu=0
\qquad\text{for every }\sigma\in C_c^{\infty}(X^\circ,\R),
\]
and so $|\varphi|^2=|\psi|^2$ a.e. on $X$. Combining the above we get $\varphi=\psi=0$ a.e. on $X$, which contradicts $\|\varphi\|_{L^2(X)}=\|\psi\|_{L^2(X)}=1$, finishing the proof. 
\end{proof}

\begin{corollary}
\label{cor:automatic-splitting}
If $C_c^\infty(X^\circ,\mathbb R)\subset \mathcal Q$, then the pair \((L,\mathcal Q)\)
satisfies hypothesis {\rm (H)}.
\end{corollary}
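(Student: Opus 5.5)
The corollary should follow directly from Proposition~\ref{prop:automatic-splitting}, applied eigenspace by eigenspace. To verify Hypothesis~\textup{(H)}, fix an arbitrary \(q\in\mathcal Q\) and an eigenvalue \(\lambda\) of \(L_q\) with \(\dim E_{q,\lambda}\ge 2\). Then \(E:=E_{q,\lambda}\) is a closed subspace of \(L^2(X)\) of dimension at least two. It is in fact finite-dimensional, since \(L_q\) has compact resolvent, but the proposition does not use this. Proposition~\ref{prop:automatic-splitting} then provides \(\sigma\in C_c^\infty(X^\circ,\R)\) such that \(G_\sigma=(P_E M_\sigma)|_E\neq cI_E\) for every \(c\in\R\), that is, \(G_\sigma\notin\R I_{E}\).

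Next we need \(\sigma\) to be an admissible perturbation. The inclusion \(C_c^\infty(X^\circ,\R)\subset\mathcal Q\) gives \(\sigma\in\mathcal Q\). The only thing to check is that the operator \(M_\sigma\) in \textup{(H$_{q,\lambda,\sigma}$)} means the same in both places. This holds because of the identification \(\sigma\leftrightarrow\iota(\sigma)\) in Definition~\ref{def:admissible}: \(\iota(\sigma)\) should be the smooth function \(\sigma\) itself, regarded as an element of \(L^\infty(X,\R)\). This is the natural reading of the inclusion hypothesis, so \(M_\sigma\) is multiplication by that function in both \(\mathcal Q\) and the proposition. With this, \(\sigma\in\mathcal Q\) satisfies \textup{(H$_{q,\lambda,\sigma}$)}.

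Because \(q\) and the multiple eigenvalue \(\lambda\) were arbitrary, the pair \((L,\mathcal Q)\) satisfies Hypothesis~\textup{(H)}. Nothing here is a genuine obstacle. The one point to make explicit is that the inclusion \(C_c^\infty(X^\circ,\R)\subset\mathcal Q\) is compatible with the embedding \(\iota\). No self-adjointness, compact resolvent or unique continuation is used beyond the bare definition of \(E_{q,\lambda}\).
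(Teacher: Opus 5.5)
Your proof is correct and is exactly the paper's argument: apply Proposition~\ref{prop:automatic-splitting} to $E=\Ker(L_q-\lambda I)$, and use the inclusion $C_c^\infty(X^\circ,\mathbb R)\subset\mathcal Q$ to see that the resulting $\sigma$ lies in $\mathcal Q$. Your side remarks are fine as well. The one caveat is that the corollary itself does not assume compact resolvent, so the finite-dimensionality aside does not follow from its hypotheses; as you note, it is not needed anyway.
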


\begin{proof}
This follows from
Proposition~\ref{prop:automatic-splitting} applied to $E=\Ker(L_q-\lambda I)$.

\end{proof}

\begin{remark}
\label{rem:localized-ucp}
Let \(\omega\subset X^\circ\) be a non-empty open set and set
\(
Q_\omega:=C_c^\infty(\omega,\mathbb R).
\)
For a finite-dimensional subspace \(E\subset L^2(X)\), define
\(
E_\omega:=\{u|_\omega:\ u\in E\}\subset L^2(\omega).
\)
Then, if \(\dim E\geq 2\),
\[
\exists \sigma\in Q_\omega
\quad\text{such that}\quad
(P_E M_\sigma)|_E\notin \mathbb R I_E
\]
if and only if
\[
E_\omega\neq \{0\}.
\]
Indeed, if \(E_\omega=\{0\}\), then clearly
\(
(P_E M_\sigma)|_E=0\in \mathbb R I_E.
\)
Conversely, assume that \(E_\omega\neq\{0\}\), and suppose that
\[
(P_E M_\sigma)|_E\in \mathbb R I_E
\qquad\text{for every }\sigma\in C_c^\infty(\omega,\mathbb R).
\]
Choose \(u\in E\) with \(u|_\omega\neq0\) and \(\|u\|_{L^2(X)}=1\), and choose
\(v\in E\cap u^\perp\) with \(\|v\|_{L^2(X)}=1\). Repeating the proof of
Proposition~\ref{prop:automatic-splitting}, with test functions supported in \(\omega\), gives
\[
u\overline v=0
\qquad\text{and}\qquad
|u|^2=|v|^2
\quad\text{a.e. on }\omega.
\]
Hence \(u=v=0\) a.e. on \(\omega\), contradicting \(u|_\omega\neq0\).
Therefore there exists \(\sigma\in C_c^\infty(\omega,\mathbb R)\) such that
\[
(P_E M_\sigma)|_E\notin \mathbb R I_E .
\]
In particular, for an eigenspace
\(
E_q=\ker(L_q-\lambda I),
\)
set
\(
E_{q,\omega}:=\{u|_\omega:u\in E_q\}\subset L^2(\omega).
\)
If, in addition, \(C_c^\infty(\omega,\mathbb R)\subset \mathcal{Q}\), then the preceding
criterion shows that the perturbation \(\sigma\) in Hypothesis~{\rm (H)} can be
chosen with support in \(\omega\) provided
\[
E_{q,\omega}\neq\{0\}.
\]
A simple sufficient condition for this is the weak unique-continuation
property from \(\omega\):
\[
u\in\ker(L_q-\lambda I),
\qquad
u=0\text{ a.e. on }\omega
\quad\Longrightarrow\quad
u=0\text{ a.e. on }X.
\tag{\(\mathrm{UCP}_\omega\)}
\]
Indeed, if \(R_\omega:E_q\to L^2(\omega)\) is the restriction map
\(
R_\omega u=u|_\omega,
\)
then \((\mathrm{UCP}_\omega)\) says precisely that
\[
\ker R_\omega=\{0\}.
\]
Thus \(R_\omega\) is injective. Hence, if \(E_q\neq\{0\}\), then
\(
E_{q,\omega}=R_\omega(E_q)\neq\{0\}.
\)
Consequently, when \(\dim E_q\ge2\), there exists
\(
\sigma\in C_c^\infty(\omega,\mathbb R)
\)
such that
\[
(P_{E_q}M_\sigma)|_{E_q}\notin \mathbb R I_{E_q}.
\]
\end{remark}

\begin{lemma}
\label{lem:bounded-perturbations}
Let $H$ be a complex Hilbert space, and let
\[
L:\Dom(L)\subset H\to H
\]
be self-adjoint, semibounded, and with compact resolvent. Let $B\in \mathcal B(H)$ be bounded and self-adjoint. Then $L+B$ is self-adjoint on $\Dom(L)$, semibounded, and has compact resolvent.
\end{lemma}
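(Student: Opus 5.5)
We prove the three assertions in order: self-adjointness by Kato--Rellich with relative bound zero, semiboundedness by a direct estimate, and compactness of the resolvent via a resolvent identity.

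\textbf{Self-adjointness.} Since $B$ is bounded and symmetric, it is $L$-bounded with relative bound $0$:
\[
\|Bu\|\le \|B\|\,\|u\|\le 0\cdot\|Lu\|+\|B\|\,\|u\|, \qquad u\in\Dom(L).
\]
The Kato--Rellich theorem then gives that $L+B$ is self-adjoint on $\Dom(L)$. A direct argument also works. $L+B$ is symmetric on $\Dom(L)$, and for $\mu\in\R$ with $|\mu|>\|B\|$ we have
\[
L+B\pm i\mu=\bigl(I+B(L\pm i\mu)^{-1}\bigr)(L\pm i\mu).
\]
Here $\|(L\pm i\mu)^{-1}\|\le 1/|\mu|$, so $\|B(L\pm i\mu)^{-1}\|<1$ and the first factor is invertible by a Neumann series. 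Hence $\operatorname{Ran}(L+B\pm i\mu)=H$, which gives self-adjointness.

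\textbf{Semiboundedness.} If $L\ge C$, then for $u\in\Dom(L)$,
\[
\inner{(L+B)u}{u}\ge C\|u\|^2-\|B\|\,\|u\|^2,
\]
so $L+B\ge C-\|B\|$.

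\textbf{Compact resolvent.} This is the only step needing a small argument. Pick $z\in\C$ with $\operatorname{Re} z< C-\|B\|-1$. Then $z$ lies in the resolvent set of both $L$ and $L+B$, since both operators are self-adjoint and bounded below by $C-\|B\|$. The second resolvent identity gives
\[
(L+B-z)^{-1}=(L-z)^{-1}-(L+B-z)^{-1}\,B\,(L-z)^{-1}.
\]
The first term on the right is compact by hypothesis. The second term is a bounded operator composed with a compact one, so it is compact too. Hence $(L+B-z)^{-1}$ is compact for this $z$. By the first resolvent identity, compactness at one point of the resolvent set propagates to every point, so $L+B$ has compact resolvent.

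No step is a genuine obstacle. The only care needed is to choose $z$ in the common resolvent set, which the semibounded bound from the previous step guarantees, and to justify the resolvent identity on $\Dom(L)=\Dom(L+B)$; this follows from the equality of domains already established.
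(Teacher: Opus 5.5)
Your proof is correct and follows the paper's argument: Kato--Rellich for self-adjointness on $\Dom(L)$ (the paper cites Kato, Ch.~V, Thm.~4.3), the bound $L+B\ge C-\|B\|$ for semiboundedness, and the second resolvent identity at a point below the common lower bound for compactness. Your extras are correct refinements of steps the paper leaves implicit: the Neumann-series proof of self-adjointness, and propagating compactness to every $z\in\rho(L+B)$ via the first resolvent identity.
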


\begin{proof} Obviously $L+B$ has domain $\Dom(L)$ and is symmetric. By \cite[Chapter V, \S 4, Theorem 4.3]{Kato}
 $L+B$ is also  self-adjoint on $\Dom(L)$. Now since  $L\ge -CI$, for some $C \in \R$ and $B \in \mathcal B(H)$, we also get $L+B\ge -(C+\norm{B})I$; i.e., $L+B$ is also semibounded. Now, to prove that $L+B$ has compact resolvent, choose $z<-(C+\|B\|)$.  Then $z\in \rho(L+B)$, and also  $z\in \rho(L)$. We may therefore apply the resolvent identity:
\[
(L+B-z)^{-1}-(L-z)^{-1}
=
-(L+B-z)^{-1}B(L-z)^{-1}.
\]
Since $L$ has compact resolvent, the operator $(L-z)^{-1}$ is compact. Moreover the operators $B, (L+B-z)^{-1}$ are bounded, and so the right-hand side of the above equality is a compact operator. Hence, the operator $(L+B-z)^{-1}$ is also compact, and this finishes the proof. 
\end{proof}

\begin{lemma}
\label{lem:lipschitz}
Let $L: \Dom(L) \subset L^2(X) \rightarrow L^2(X)$ be self-adjoint, semibounded, and with compact resolvent. Let $\cQ$ be an admissible perturbation space. Then for every $p,q\in \cQ$ and every $k\ge 1$,
\[
|\lambda_k(p)-\lambda_k(q)|\le \norm{p-q}_{L^\infty(X)}.
\]
\end{lemma}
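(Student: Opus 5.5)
The plan is to use the min-max characterisation of the eigenvalues together with Lemma~\ref{lem:bounded-perturbations}. By Lemma~\ref{lem:bounded-perturbations}, applied with $B=M_p$ and $B=M_q$, the operators $L_p$ and $L_q$ are self-adjoint on the common domain $\Dom(L)$, semibounded, and have compact resolvent. Since $p,q$ are real-valued and in $L^\infty(X)$, the multiplication operators are bounded and self-adjoint with $\norm{M_p}\le\norm{p}_{L^\infty}$. Hence both spectra are discrete, and $\lambda_k(p)$, $\lambda_k(q)$ are well defined for every $k\ge1$. Each is given by the Courant--Fischer formula
\[
\lambda_k(q)=\min_{\substack{V\subset \Dom(L)\\ \dim V=k}}\ \max_{u\in V\setminus\{0\}}\frac{\inner{L_q u}{u}}{\norm{u}^2},
\]
and the same holds with $q$ replaced by $p$. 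I will use the operator-domain version so that I do not need to identify the form domain. The formula is valid because $L_q$ is semibounded with compact resolvent, so it has an orthonormal eigenbasis lying in $\Dom(L)$.

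The key step is a pointwise comparison of the Rayleigh quotients. For $u\in\Dom(L)$ we have
\[
\inner{L_p u}{u}-\inner{L_q u}{u}=\int_X (p-q)\,|u|^2\,d\mu ,
\]
and therefore
\[
\bigl|\inner{L_p u}{u}-\inner{L_q u}{u}\bigr|\le \norm{p-q}_{L^\infty(X)}\norm{u}^2 .
\]
It follows that $\inner{L_p u}{u}\le \inner{L_q u}{u}+\norm{p-q}_{L^\infty}\norm{u}^2$ for every $u$. Taking the maximum over $u\in V$ and then the minimum over $k$-dimensional $V\subset\Dom(L)$ gives $\lambda_k(p)\le\lambda_k(q)+\norm{p-q}_{L^\infty}$. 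Exchanging the roles of $p$ and $q$ gives the reverse inequality, and the two together prove the claim.

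There is no real obstacle here. The only point that needs care is justifying min-max for $L_q$ with test spaces taken inside the \emph{same} space $\Dom(L)$ for both operators; this is exactly what Lemma~\ref{lem:bounded-perturbations} provides, since it shows $\Dom(L_q)=\Dom(L)$. One should also note that the $L^\infty$ norm here means the norm of the image under $\iota$, so the admissible Fr\'echet structure of $\cQ$ plays no role in this lemma.
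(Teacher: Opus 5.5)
Your proof is correct and is essentially the paper's argument: you compare the Rayleigh quotients pointwise via $\bigl|\int_X (p-q)|u|^2\,d\mu\bigr|\le \norm{p-q}_{L^\infty(X)}\norm{u}^2$ and then apply the min--max principle in both directions. The only difference is that you take the test subspaces in the common operator domain $\Dom(L)$, justified by Lemma~\ref{lem:bounded-perturbations} and by the eigenbasis lying in $\Dom(L)$, whereas the paper works on the common form domain of the closed quadratic forms; either version of Courant--Fischer is valid here.
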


\begin{proof}
To simplify the notation we will denote by  $Q_A$ the closed quadratic form of an operator $A$.  Since $p,q \in \cQ$ are bounded, we have   $\Dom(Q_L)=\Dom(Q_{L_p})= \Dom(Q_{L_q})$. For $u \in \Dom(Q_L)$ with $\norm{u}_{L^2}=1$ one has
\[
Q_{L_p}(u)=Q_{L}(u)+\int_X p|u|^2\,d\mu,
\qquad
Q_{L_q}(u)=Q_{L}(u)+\int_X q|u|^2\,d\mu.
\]
Hence
\[
|Q_{L_p}(u)-Q_{L_q}(u)|
=\left|\int_X (p-q)|u|^2\,d\mu\right|
\le \norm{p-q}_{L^\infty(X)},
\]
and so 
\[
Q_{L_q}(u)-\norm{p-q}_{L^\infty}
\le Q_{L_p}(u)
\le Q_{L_q}(u)+\norm{p-q}_{L^\infty}.
\]
The assumptions on $L$ allow us to  apply the min--max principle (see e.g \cite[Theorem~XIII.1]{RS4}) and get 
\[
\lambda_k(q)-\norm{p-q}_{L^\infty}
\le \lambda_k(p)
\le \lambda_k(q)+\norm{p-q}_{L^\infty},
\]
which proves the claim.
\end{proof}

\begin{proposition}
\label{lem:kato}
 Let
\(
L:\Dom(L)\subset L^2(X)\to L^2(X)
\)
be self-adjoint, semibounded and  with compact resolvent. Let \(p,\sigma\in \cQ\), and for $t \in \R$, define $T(t):=L+M_p+tM_\sigma$.
Let \(\lambda\) be an eigenvalue of \(T(0)=L+M_p\) of multiplicity
\[
m:=\dim E,
\qquad
E:=\ker(T(0)-\lambda I).
\]
Then there exist \(t_0>0\), real-analytic functions $\{\mu_j\}_{j\ge 1},\mu_j:(-t_0,t_0)\to \mathbb R $ and $\{\nu_j\}_{j\ge 1}, \nu_j:(-t_0,t_0)\to L^2(X,d\mu) $
such that:

\begin{enumerate}
\item for every fixed \(|t|<t_0\), one has $T(t)\nu_j(t)=\mu_j(t)\nu_j(t)$ for all $j\ge 1$
and the family \(\{\nu_j(t)\}_{j\ge 1}\) forms a complete orthonormal basis of
\(L^2(X)\);

\item after (possible) relabelling, one has $\mu_j(0)=\lambda$ for $j=1,\dots,m,$
and the set \(\{\nu_1(0),\dots,\nu_m(0)\}\) forms an orthonormal basis of \(E\);

\item for every \(j=1,\dots,m\), one has the Feynman--Hellmann formula $G_\sigma \nu_j(0)=\mu_j'(0)\,\nu_j(0).$
In particular, the numbers $\mu_1'(0),\dots,\mu_m'(0)$
are precisely the eigenvalues of \(G_\sigma\), counted with multiplicity.
\end{enumerate}
\end{proposition}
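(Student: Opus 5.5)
The plan is to apply Kato's analytic perturbation theory to the family $T(t)=L+M_p+tM_\sigma$. By Lemma~\ref{lem:bounded-perturbations}, each $T(t)$ is self-adjoint on the fixed domain $\Dom(L)$, semibounded, and has compact resolvent. Moreover, $t\mapsto T(t)$ is a self-adjoint holomorphic family of type (A) in the sense of \cite[Chapter VII]{Kato}. Indeed, the domain is constant, and for $u\in\Dom(L)$ the map $t\mapsto T(t)u=(L+M_p)u+tM_\sigma u$ is affine in $t$, hence entire; the perturbation $M_\sigma$ is bounded, with $\norm{M_\sigma}\le\norm{\sigma}_{L^\infty}$ finite by Definition~\ref{def:admissible}. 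Since the resolvent is compact, the spectrum of each $T(t)$ is discrete.

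\textbf{Analytic eigenbranches (items (1) and (2)).} I would invoke the Kato--Rellich theorem for self-adjoint holomorphic families with compact resolvent (\cite[Chapter VII, \S3, Theorem 3.9]{Kato}). For real $t$ in a neighbourhood of $0$, it provides real-analytic eigenvalue functions $\mu_j(t)$ and real-analytic normalized eigenvectors $\nu_j(t)$. For each fixed $t$, these form a complete orthonormal system of $L^2(X)$.

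Completeness follows from the compact resolvent. The eigenprojections sum strongly to the identity, and the analytic reduction is obtained via the Riesz projection $P(t)=-\frac{1}{2\pi i}\oint_\Gamma (T(t)-z)^{-1}\,dz$. Here $\Gamma$ is a small circle around $\lambda$ separating it from the rest of $\sigma(T(0))$, and it is kept fixed for $|t|<t_0$ by continuity of the spectrum; Lemma~\ref{lem:lipschitz} gives this quantitatively, since eigenvalues move by at most $|t|\norm{\sigma}_{L^\infty}$.

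Inside $\operatorname{Ran}P(t)$, which has constant dimension $m$, one applies Rellich's finite-dimensional theorem. This step uses Kato's transformation function $U(t)$, with $U(t)P(0)U(t)^{-1}=P(t)$, to conjugate the family to a real-analytic Hermitian matrix family on $E$. It yields exactly $m$ branches with $\mu_j(0)=\lambda$ and $\{\nu_j(0)\}_{j\le m}$ an orthonormal basis of $E$, which is item (2) after relabelling.

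\textbf{Feynman--Hellmann formula (item (3)).} For $j\le m$, differentiate $T(t)\nu_j(t)=\mu_j(t)\nu_j(t)$ at $t=0$. Justifying this requires $\nu_j$ to be differentiable as a $\Dom(L)$-valued map, which holds because $\nu_j(t)=P(t)\nu_j(t)$ and $T(t)P(t)$ is bounded and analytic. Differentiating gives
\[
(T(0)-\lambda)\nu_j'(0)+M_\sigma\nu_j(0)=\mu_j'(0)\nu_j(0).
\]
Take the inner product with any $\phi\in E$. Since $T(0)$ is self-adjoint and $(T(0)-\lambda)\phi=0$, the first term drops out, leaving
\[
\inner{M_\sigma\nu_j(0)}{\phi}=\mu_j'(0)\inner{\nu_j(0)}{\phi}.
\]
This says exactly that $P_EM_\sigma\nu_j(0)=\mu_j'(0)\nu_j(0)$, that is, $G_\sigma\nu_j(0)=\mu_j'(0)\nu_j(0)$. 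Since $\{\nu_j(0)\}_{j\le m}$ is an orthonormal eigenbasis of $E$ for $G_\sigma$, the numbers $\mu_j'(0)$ are its eigenvalues counted with multiplicity.

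\textbf{Main obstacle.} The delicate point is the global statement in (1): a single $t_0$ working for \emph{all} $j\ge1$ simultaneously, together with completeness. Kato's theorem is naturally local around each eigenvalue, and infinitely many eigenvalues could in principle shrink the radius.

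My fallback is to note that the paper only uses the branches through $\lambda$. I would therefore take $t_0$ from the fixed contour $\Gamma$ around $\lambda$, and obtain the remaining $\nu_j$ by applying Kato's global theorem (\cite[VII, Theorem 3.9]{Kato}), which provides analytic families on a real interval containing $0$. The radius $t_0$ is then determined only by the isolation of $\lambda$, and the global theorem guarantees that the full family $\{\nu_j(t)\}_{j\ge1}$ remains complete for every real $t$.
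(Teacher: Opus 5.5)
Your proposal is correct and follows essentially the paper's route: a type (A) family with compact resolvent, Kato's Theorem VII.3.9 on a real interval, then pairing the eigenvalue equation with $e\in E$ and using self-adjointness of $T(0)$. Theorem VII.3.9 already gives one common interval for all branches together with completeness, so your Riesz-projection/Rellich detour and your ``main obstacle'' are not needed; and the paper pairs with $e$ \emph{before} letting $t\to0$, i.e.\ it uses $t\inner{M_\sigma\nu_j(t)}{e}=(\mu_j(t)-\lambda)\inner{\nu_j(t)}{e}$, so it needs only $L^2$-continuity of $\nu_j$ rather than your $\Dom(L)$-valued differentiability.
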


\begin{proof}
Consider the complexified family $\widetilde T(z):=L+M_p+zM_\sigma, z\in\mathbb C.$ 
Since \(p,\sigma\in \mathcal{Q}\subset L^\infty(X,\mathbb R)\), the multiplication operators
\(M_p\) and \(M_\sigma\) are bounded on \(L^2(X)\). Hence the family
\(z\mapsto \widetilde T(z)\) has constant domain \(\Dom(L)\), and for every
\(u\in \Dom(L)\) the map
\[
z\longmapsto \widetilde T(z)u=(L+M_p)u+zM_\sigma u
\]
is entire. Therefore \(\widetilde T\) is a holomorphic family of type \textup{(A)} in the sense of \cite[Chapter VII, \S 2]{Kato}.
Moreover, $\widetilde T(z)^*=\widetilde T(\overline z)$, and in particular, for every \(t\in \mathbb R\), the operator $T(t)=\widetilde T(t)$
is self-adjoint on \(\Dom(L)\), since \(M_p+tM_\sigma\) is bounded and self-adjoint. We next show that \(\widetilde T(z)\) has compact resolvent for every \(z\in\mathbb C\).
Set $B_z:=M_p+zM_\sigma$. Then $B_z \in \mathcal{B}(L^2(X))$.  Since \(L\) is self-adjoint, \(i\tau\in\rho(L)\) for every \(\tau\in\mathbb R\setminus\{0\}\), and $\|(L-i\tau)^{-1}\|\le |\tau|^{-1}.$
Hence choosing  \(\tau\in\mathbb R\setminus\{0\}\) such that \(|\tau|>\|B_z\|\) we obtain 
\[
\|B_z(L-i\tau)^{-1}\|<1,
\]
and therefore $\widetilde T(z)-i\tau$
is invertible, with
\[
(\widetilde T(z)-i\tau)^{-1}
=
(L-i\tau)^{-1}\bigl(I+B_z(L-i\tau)^{-1}\bigr)^{-1}.
\]
Since \((L-i\tau)^{-1}\) is compact, it follows that
\((\widetilde T(z)-i\tau)^{-1}\) is compact. After choosing \(t_0>0\) sufficiently small, and set \(I_0:=(-t_0,t_0)\). An application of
\cite[Chapter~VII, Theorem~3.9]{Kato} yields scalar-valued functions $\widetilde\lambda_1,\widetilde\lambda_2,\dots,$
and \(L^2(X)\)-valued functions $\widetilde\phi_1,\widetilde\phi_2,\dots,$
all holomorphic on \(I_0\), such that for every
\(t\in I_0\),
\[
T(t)\widetilde\phi_n(t)=\widetilde\lambda_n(t)\widetilde\phi_n(t),
\qquad
\|\widetilde\phi_n(t)\|_{L^2(X)}=1,
\qquad
\langle \widetilde\phi_n(t),\widetilde\phi_k(t)\rangle_{L^2(X)}=\delta_{nk},
\]
and the family \(\{\widetilde\phi_n(t)\}_{n\ge1}\) is a complete orthonormal basis of
\(L^2(X)\). Moreover, the list \(\{\widetilde\lambda_n(t)\}_{n\ge1}\) exhausts the spectrum of
\(T(t)\), counted with multiplicity.
Since \(\lambda\) has multiplicity \(m\) for \(T(0)\), after relabelling the family if
necessary, we may assume that
\[
\widetilde\lambda_j(0)=\lambda
\qquad\text{for }j=1,\dots,m,
\]
and
\[
\widetilde\lambda_j(0)\neq \lambda
\qquad\text{for }j\ge m+1.
\]
For every \(j\ge 1\), define $\mu_j(t):=\widetilde\lambda_j(t)$ and $\nu_j(t):=\widetilde\phi_j(t)$ for $t\in I_0$.
Then the functions \(\mu_j,\nu_j\) are real-analytic on \(I_0\) and satisfy
\[
T(t)\nu_j(t)=\mu_j(t)\nu_j(t),
\qquad
\|\nu_j(t)\|_{L^2(X)}=1,
\qquad
\langle \nu_j(t),\nu_k(t)\rangle_{L^2(X)}=\delta_{jk},
\]
and the family \(\{\nu_j(t)\}_{j\ge1}\) is a complete orthonormal basis of \(L^2(X)\) for every
\(t\in I_0\). Moreover,
\[
\mu_j(0)=\lambda
\qquad\text{for }j=1,\dots,m.
\]
At \(t=0\), the vectors \(\nu_1(0),\dots,\nu_m(0)\) are orthonormal eigenvectors of
\(T(0)\) corresponding to the eigenvalue \(\lambda\). Since \(\lambda\) has
multiplicity \(m\), they form an orthonormal basis of \(E\).
It remains to prove \textup{(3)}.
Fix \(j\in\{1,\dots,m\}\), and let \(e\in E\) be arbitrary. Taking the inner product of
\[
T(t)\nu_j(t)=\mu_j(t)\nu_j(t)
\]
with \(e\), we get
\[
\langle T(t)\nu_j(t),e\rangle_{L^2(X)}
=
\mu_j(t)\,\langle \nu_j(t),e\rangle_{L^2(X)}.
\]
Since $T(t)=T(0)+tM_\sigma,$
this becomes
\[
\langle T(0)\nu_j(t),e\rangle_{L^2(X)}
+
t\,\langle M_\sigma\nu_j(t),e\rangle_{L^2(X)}
=
\mu_j(t)\,\langle \nu_j(t),e\rangle_{L^2(X)}.
\]
Now \(e\in E\) means $T(0)e=\lambda e,$
and since \(T(0)\) is self-adjoint, we have
\[
\langle T(0)\nu_j(t),e\rangle_{L^2(X)}
=
\langle \nu_j(t),T(0)e\rangle_{L^2(X)}
=
\lambda\,\langle \nu_j(t),e\rangle_{L^2(X)}.
\]
Hence
\[
t\,\langle M_\sigma\nu_j(t),e\rangle_{L^2(X)}
=
(\mu_j(t)-\lambda)\,\langle \nu_j(t),e\rangle_{L^2(X)}.
\]
For \(t\neq 0\), divide by \(t\) and let \(t\to 0\). Since \(\mu_j\)
is real-analytic,
\[
\frac{\mu_j(t)-\lambda}{t}\longrightarrow \mu_j'(0).
\]
Since \(\nu_j\) is \(L^2(X)\)-valued real-analytic, we have $\nu_j(t)\longrightarrow \nu_j(0)$ and $M_\sigma\nu_j(t)\longrightarrow M_\sigma\nu_j(0)$ in $L^2(X)$.
Therefore
\[
\langle M_\sigma\nu_j(0),e\rangle_{L^2(X)}
=
\mu_j'(0)\,\langle \nu_j(0),e\rangle_{L^2(X)}
\qquad \text{for every }e\in E.
\]
Since \(\nu_j(0)\in E\), this is equivalent to
\[
\langle P_E M_\sigma\nu_j(0),e\rangle_{L^2(X)}
=
\mu_j'(0)\,\langle \nu_j(0),e\rangle_{L^2(X)}
\qquad \text{for every }e\in E.
\]
Hence
\[
P_E M_\sigma\nu_j(0)=\mu_j'(0)\,\nu_j(0),
\]
that is,
\[
G_\sigma \nu_j(0)=\mu_j'(0)\,\nu_j(0).
\]
Thus each \(\nu_j(0)\), \(j=1,\dots,m\), is an eigenvector of \(G_\sigma\) with
eigenvalue \(\mu_j'(0)\). Since $\nu_1(0),\dots,\nu_m(0)$
is an orthonormal basis of \(E\), the numbers $\mu_1'(0),\dots,\mu_m'(0)$
are exactly the eigenvalues of \(G_\sigma\), counted with multiplicity.
\end{proof}

\begin{remark}[Comparison with the transversality hypothesis of Arnold]
    Let
\(
        E_q=\ker(L+M_q-\lambda I).
\)
For \(a\in \mathcal{Q}\), set
\(
        H_a:=L+M_{q+a}.
\)
Thus \(H_0=L+M_q\).  Y. Colin de Verdi\`ere as in \cite{CDV88}, considers
the map
\[
        \Phi:a\longmapsto \Phi(a),
\]
where \(\Phi(a)\) is the quadratic form induced by \(H_a\) on the eigenspace
corresponding to the eigenvalue issuing from \(\lambda\).  Following the terminology of \cite{CDV88} we call the following hypothesis on $\Phi$ the transversality hypothesis of Arnold, which asks whether the differential
\(
        d\Phi_0
\)
quadratic forms on \(E_q\); see
\cite[pp.~185--186]{CDV88}. In the present potential perturbation setting, the differential of \(\Phi\)
at \(0\), in the direction \(\sigma\in \mathcal{Q}\), is
\[
        d\Phi_0(\sigma)[u,v]
        =
        \frac{d}{dt}\bigg|_{t=0}
        \left\langle (L+M_q+tM_\sigma)u,v\right\rangle_{L^2}
        =
        \left\langle M_\sigma u,v\right\rangle_{L^2},
        \qquad u,v\in E_q,
\]
or, equivalently, 
\[
        d\Phi_0(\sigma)[u,v]
        =
        \left\langle G_\sigma u,v\right\rangle_{L^2},
        \qquad u,v\in E_q .
\]
Therefore, if the strong Arnold hypothesis holds, then \(d\Phi_0\) is
surjective onto all quadratic forms on \(E_q\) and it is easy to be seen that the latter implies that for all $\sigma \in \mathcal{Q}$ we have  $ G_\sigma\notin \mathbb R I_{E_q}$.
Thus the strong Arnold hypothesis implies our hypothesis
\textup{(H$_{q,\lambda,\sigma}$)}.  Our hypothesis is weaker because it only asks for
one such non-scalar direction \(\sigma\), and not for the surjectivity of
\(d\Phi_0\) onto all quadratic forms on \(E_q\).
\end{remark}

\begin{lemma}
    \label{lem:An-open}
    Let
\(
L:\Dom(L)\subset L^2(X)\to L^2(X)
\)
be self-adjoint, semibounded and  with compact resolvent, and let $q \in \cQ$. For $n \in \N$, we also define the sets 
\[
A_n:=\{q\in\mathcal Q:\lambda_1(q),\dots,\lambda_n(q)\text{ are simple}\},
\qquad A_0:=\mathcal Q.
\]
 Then for every $n \geq 1$, the set $A_n$ is open in $\cQ$. 
\end{lemma}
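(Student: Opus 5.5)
The plan is to combine the Lipschitz continuity of Lemma~\ref{lem:lipschitz} with the continuity of the embedding $\iota:\cQ\to L^\infty(X,\R)$. First I will reformulate membership in $A_n$ through strict inequalities. Since the eigenvalues are listed in non-decreasing order with multiplicity, the eigenvalues $\lambda_1(q),\dots,\lambda_n(q)$ are all simple if and only if
\[
\lambda_1(q)<\lambda_2(q)<\cdots<\lambda_n(q)<\lambda_{n+1}(q).
\]
The last inequality $\lambda_n(q)<\lambda_{n+1}(q)$ is needed: without it, $\lambda_n(q)$ could coincide with $\lambda_{n+1}(q)$. The converse implication is also clear, because a multiple eigenvalue $\lambda_k(q)$ with $k\le n$ must equal $\lambda_{k-1}(q)$ or $\lambda_{k+1}(q)$. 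The operator $L_q$ is self-adjoint, semibounded and has compact resolvent by Lemma~\ref{lem:bounded-perturbations}, so the sequence $\lambda_k(q)$ is well defined and infinite. If $L^2(X)$ were finite-dimensional, one would simply drop the condition involving $\lambda_{n+1}$ whenever $n$ equals the dimension.

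Next, fix $q\in A_n$ and set
\[
\delta:=\min_{1\le k\le n}\bigl(\lambda_{k+1}(q)-\lambda_k(q)\bigr)>0 .
\]
By Lemma~\ref{lem:lipschitz}, if $\norm{p-q}_{L^\infty(X)}<\delta/2$, then $|\lambda_k(p)-\lambda_k(q)|<\delta/2$ for every $k\le n+1$. Hence for $k\le n$,
\[
\lambda_{k+1}(p)-\lambda_k(p)>\lambda_{k+1}(q)-\lambda_k(q)-\delta\ge 0,
\]
so $p\in A_n$. This shows that $A_n$ contains the set $\{p\in\cQ:\norm{\iota(p)-\iota(q)}_{L^\infty}<\delta/2\}$. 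That set is the preimage under the continuous map $p\mapsto \iota(p)-\iota(q)$ of an open ball in $L^\infty$, so it is an open neighbourhood of $q$ in $\tau_\cQ$. Therefore $A_n$ is open.

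There is no serious obstacle here. The only point requiring care is the characterization of simplicity through strict gaps, in particular the inclusion of the gap with $\lambda_{n+1}(q)$. The openness itself follows immediately from the uniform Lipschitz bound together with the continuity of $\iota$.
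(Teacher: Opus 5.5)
Your proof is correct and follows the paper's argument essentially verbatim: you characterise $A_n$ by the strict chain $\lambda_1(q)<\cdots<\lambda_n(q)<\lambda_{n+1}(q)$, take half the minimal gap as the radius, apply Lemma~\ref{lem:lipschitz} to the first $n+1$ eigenvalues, and conclude using continuity of $\iota:\cQ\to L^\infty(X,\R)$. Your explicit justification of the strict-gap characterisation, including the gap with $\lambda_{n+1}$, is a small clarification that the paper leaves implicit.
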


\begin{proof}
    By Lemma~\ref{lem:bounded-perturbations}, every $L_q$ is self-adjoint on $\Dom(L)$, bounded from below, and has compact resolvent. Hence its spectrum consists of a non-decreasing sequence of real eigenvalues
\[
\lambda_1(q)\le \lambda_2(q)\le \cdots \nearrow +\infty,
\]
counted with multiplicity.
We claim that for each $n\ge 1$ the set  $A_n$ is open in $\tau_\cQ$. Indeed, let $p\in A_n$. Then, by definition of $A_n$, we have 
\[
\lambda_1(p)<\lambda_2(p)<\cdots<\lambda_n(p)<\lambda_{n+1}(p).
\]
Set $\delta:=2^{-1}\min_{1\le j\le n}\bigl(\lambda_{j+1}(p)-\lambda_j(p)\bigr)>0.$
If $q\in \cQ$ satisfies $\norm{p-q}_{L^{\infty}(X)}<\delta,$
then Lemma~\ref{lem:lipschitz} yields
\[
|\lambda_j(p)-\lambda_j(q)|<\delta
\qquad\text{for }1\le j\le n+1.
\]
Hence, for every $j=1,\dots,n$, we have 
\begin{align*}
\lambda_{j+1}(q)-\lambda_j(q)
&=\bigl(\lambda_{j+1}(p)-\lambda_j(p)\bigr)
+\bigl(\lambda_{j+1}(q)-\lambda_{j+1}(p)\bigr)
-\bigl(\lambda_j(q)-\lambda_j(p)\bigr) \\
&>\bigl(\lambda_{j+1}(p)-\lambda_j(p)\bigr)-2\delta\ge0.
\end{align*}
Since the first inequality is strict, we obtain
\[
\lambda_1(q)<\lambda_2(q)<\cdots<\lambda_n(q)<\lambda_{n+1}(q).
\]
Thus $q\in A_n$, and because the  inclusion $\cQ\hookrightarrow L^\infty(X,\R)$ is continuous, we have shown that  $A_n$ is open in $\tau_\cQ$.
\end{proof}

\begin{lemma}\label{lem:reduce_mult_abstract}
 Let
\(
L:\Dom(L)\subset L^2(X)\to L^2(X)
\)
be self-adjoint, semibounded and  with compact resolvent.  Assume that the pair $(L,\cQ)$ satisfies the hypothesis {\rm (H)}. Fix \(n\ge 0\), and let
\(p\in A_n\).   Suppose that the eigenvalue
\(
\lambda:=\lambda_{n+1}(p)
\)
of \(L_p\) has multiplicity $m:=\dim \Ker(L_p-\lambda I)>1.$
Then for every neighborhood \(U\) of \(p\) in \(\cQ\), there exists
\(
q\in U\cap A_n
\)
such that the multiplicity of the eigenvalue \(\lambda_{n+1}(q)\) of \(L_q\)
is strictly smaller than the multiplicity of the eigenvalue \(\lambda_{n+1}(p)\) of \(L_p\).
\end{lemma}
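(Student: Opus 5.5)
We perturb along the line $t\mapsto p+t\sigma$, where $\sigma\in\cQ$ is supplied by Hypothesis~(H) for the pair $(p,\lambda)$. Since $\dim E\ge 2$ with $E:=\Ker(L_p-\lambda I)$, there is $\sigma\in\cQ$ with $G_\sigma\notin\R I_E$. We then apply Proposition~\ref{lem:kato} to $T(t)=L+M_p+tM_\sigma$. This gives real-analytic branches $\mu_1,\dots,\mu_m$ with $\mu_j(0)=\lambda$, and by the Feynman--Hellmann formula the derivatives $\mu_1'(0),\dots,\mu_m'(0)$ are the eigenvalues of the self-adjoint operator $G_\sigma$ on $E$. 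Because $G_\sigma$ is self-adjoint and not scalar, it has at least two distinct eigenvalues. Hence the derivatives $\mu_j'(0)$ are not all equal; say $\mu_1'(0)\neq\mu_2'(0)$.

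Next we isolate the cluster. Let $d>0$ be the distance from $\lambda$ to the rest of the spectrum of $L_p$. Thus $\lambda_n(p)\le\lambda-d$ when $n\ge1$, and $\lambda_{n+m+1}(p)\ge\lambda+d$. By Lemma~\ref{lem:lipschitz}, for $|t|\,\|\sigma\|_{L^\infty}<d/2$:
\begin{itemize}
\item the eigenvalues $\lambda_1(p+t\sigma),\dots,\lambda_n(p+t\sigma)$ stay below $\lambda-d/2$;
\item the eigenvalues $\lambda_{n+1}(p+t\sigma),\dots,\lambda_{n+m}(p+t\sigma)$ lie in $(\lambda-d/2,\lambda+d/2)$;
\item all higher eigenvalues lie above $\lambda+d/2$.
\end{itemize}
By Lemma~\ref{lem:An-open}, $A_n$ is open, so $p+t\sigma\in U\cap A_n$ for $|t|$ small. (For $n=0$ this is trivial, since $A_0=\cQ$.)

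Now we identify the cluster with the analytic branches. The family $\{\mu_j(t)\}_{j\ge1}$ exhausts the spectrum of $T(t)$ with multiplicity, and each $\mu_j$ is continuous. Shrinking $t_0$, the branches $\mu_1,\dots,\mu_m$ lie in $(\lambda-d/2,\lambda+d/2)$. Every other branch has $\mu_j(0)\notin(\lambda-d,\lambda+d)$, and since its value stays at the same index count, it avoids this window. Here is the cleanest way to see that: the total multiplicity of the spectrum inside the window is exactly $m$ by the Lipschitz count above, and $\mu_1,\dots,\mu_m$ already account for $m$ of it. Hence
\[
\{\lambda_{n+1}(p+t\sigma),\dots,\lambda_{n+m}(p+t\sigma)\}=\{\mu_1(t),\dots,\mu_m(t)\}
\]
as multisets. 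Consequently, the multiplicity of $\lambda_{n+1}(p+t\sigma)$ is the number of indices $j\le m$ with $\mu_j(t)=\min_{k\le m}\mu_k(t)$.

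The final step, which I expect to be the only delicate point, is to show that for small $t\neq0$ not all $\mu_j(t)$ coincide, and that the minimum is then attained by fewer than $m$ branches. Since $\mu_1(t)-\mu_2(t)=(\mu_1'(0)-\mu_2'(0))t+O(t^2)$, we have $\mu_1(t)\neq\mu_2(t)$ for all small $t\neq0$. Thus the $m$ cluster eigenvalues are not all equal. The lowest of them, $\lambda_{n+1}(p+t\sigma)$, therefore has multiplicity at most $m-1$, which is strictly less than $m$, the multiplicity of $\lambda_{n+1}(p)$. Choosing such a small $t\neq0$ with $q:=p+t\sigma\in U\cap A_n$ completes the argument. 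Continuity of $t\mapsto p+t\sigma$ in $\tau_\cQ$ is immediate, since $\cQ$ is a topological vector space.
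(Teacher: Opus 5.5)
Your proposal is correct and follows the paper's proof. You take $\sigma$ from (H), use the analytic branches and the Feynman--Hellmann formula of Proposition~\ref{lem:kato}, isolate the cluster with Lemma~\ref{lem:lipschitz}, and identify the $m$ eigenvalues in the window with $\mu_1(t),\dots,\mu_m(t)$. The only differences are minor: you conclude from the existence of two branches with distinct derivatives, whereas the paper orders the eigenvalues of $G_\sigma$ and obtains the sharper bound $\le r$ on the multiplicity of $\lambda_{n+1}(q)$; and your window count comes directly from the Lipschitz bounds rather than from the paper's counting argument.
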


\begin{proof}
Since \(\lambda=\lambda_{n+1}(p)\) has multiplicity \(m\), the ordered eigenvalues of \(L_p\)
satisfy
\[
\lambda_{n+1}(p)=\lambda_{n+2}(p)=\cdots=\lambda_{n+m}(p)=\lambda.
\]
Moreover, since \(p\in A_n\), the first \(n\) eigenvalues are simple, and hence for all $ j \in  \{1, \cdots, n \}$ we have $\lambda_j(p)<\lambda$.
Also, by definition of the multiplicity \(m\), we know $\lambda<\lambda_{n+m+1}(p).$
Choose \(\rho>0\) such that
\[
0<\rho<
\frac13
\min\Bigl(
\{\lambda-\lambda_j(p):1\le j\le n\}
\cup
\{\lambda_{n+m+1}(p)-\lambda\}
\Bigr),
\qquad n\ge1,
\]
and if \(n=0\), choose
\[
0<\rho<\frac13\bigl(\lambda_{m+1}(p)-\lambda\bigr).
\]
If we set $I:=(\lambda-\rho,\lambda+\rho)$, then we see that  \(I\) contains no point of \(\sigma(L_p)\) other than \(\lambda\).
Let
\[
E:=\Ker(L_p-\lambda I).
\]
 Since $(L,\cQ)$ satisfies  (H), there exists $\sigma\in\cQ$
such that $G_\sigma$ is not a scalar multiple of the identity.
Now let
\[
g_1\le g_2\le \cdots \le g_m
\]
be the eigenvalues of \(G_\sigma\), counted with multiplicity. Since \(G_\sigma\notin\mathbb R I_E\), not all \(g_i\) are equal; hence there exists an integer \(1\le r<m
\) such that
\[
g_1=\cdots=g_r<g_{r+1}\le \cdots \le g_m.
\]
Consider the perturbed family $T(t):=L_p+tM_\sigma.$
By Proposition~\ref{lem:kato}, after shrinking to a sufficiently small interval $(-t_0,t_0)$
there exist real-analytic functions
\[
\mu_1(t),\dots,\mu_m(t)
\]
such that
\[
\mu_j(0)=\lambda,
\qquad
\mu_j'(0)=g_j,
\qquad j=1,\dots,m,
\]
and these are precisely the eigenvalues of
\(T(t)\) lying in the interval \(I\), counted with multiplicity, i.e., there are at least $m$ eigenvalues of $T(t)$, $t \neq 0$, inside $I$. Now, for every pair \(1\le j\le r<k\le m\), we have
\[
\mu_k(t)-\mu_j(t)
=
(\mu_k'(0)-\mu_j'(0))t+o(t)
=
(g_k-g_1)t+o(t).
\]
Setting 
\(
c:=\min_{k>r}(g_k-g_1)>0,
\)
we see that there exists \(t_1>0\) such that for every
\(
0<t<t_1
\)
and every such pair \((j,k)\), one has
\(
\mu_k(t)-\mu_j(t)>\frac{c}{2}\,t>0.
\)
Hence, for fixed \(0<t<t_1\),
\[
\mu_j(t)<\mu_k(t)
\qquad\text{whenever }1\le j\le r<k\le m,
\]
and therefore, for fixed $0<t<t_1$,  the smallest value among $\mu_1(t),\dots,\mu_m(t)$
can only be attained among $\mu_1(t),\dots,\mu_r(t).$
Equivalently, if
\begin{equation}
    \label{eq:nu(t)}
    \nu(t):=\min\{\mu_1(t),\dots,\mu_m(t)\}
\end{equation}
and
\begin{equation}
    \label{eq:J(t)}
    J(t):=\{\,j\in\{1,\dots,m\}: \mu_j(t)=\nu(t)\,\},
\end{equation}
then
\[
J(t)\subset \{1,\dots,r\},
\qquad\text{hence}\qquad
\#J(t)\le r.
\]
Since \(U\) is a neighborhood of \(p\) in \(\cQ\), and since the map
\(
t\longmapsto p+t\sigma
\)
is continuous from \(\mathbb R\) to \(\cQ\), there exists \(t_2>0\) such that
\[
p+t\sigma\in U
\qquad\text{whenever }|t|<t_2.
\]
Since, by Lemma \ref{lem:An-open}, the set  \(A_n\) is open, there exists \(t_3>0\) such that
\[
p+t\sigma\in A_n
\qquad\text{whenever }|t|<t_3.
\]
If \(n\ge 1\),  set
\[
\delta_1:=(\lambda-\rho)-\lambda_n(p)>0,
\qquad
\delta_2:=\lambda_{n+m+1}(p)-(\lambda+\rho)>0.
\]
By Lemma \ref{lem:lipschitz}, we have 
\[
|\lambda_k(p+t\sigma)-\lambda_k(p)|
\le
\|t\sigma\|_{L^\infty(X)}
=
|t|\,\|\sigma\|_{L^\infty(X)}
\qquad\text{for all }k\ge 1.
\]
Since \(\sigma\in\cQ\subset L^\infty(X)\), and $\sigma \neq 0$,  we may choose
\[
t_4:=\frac{1}{2\|\sigma\|_{L^\infty(X)}}\min\{\delta_1,\delta_2\}>0.
\]
Then for every \(|t|<t_4\) and for $n \geq 1$ we have
\[
\lambda_n(p+t\sigma)<\lambda-\rho,
\qquad
\lambda_{n+m+1}(p+t\sigma)>\lambda+\rho.
\]
Indeed, if $n\ge 1$, then for $t_4$ as above we have 
\(
|t|\,\|\sigma\|_{L^\infty(X)}<\delta_1,\delta_2
\)
implying that 
\[
\lambda_n(p+t\sigma)
\le
\lambda_n(p)+|\lambda_n(p+t\sigma)-\lambda_n(p)|
<
\lambda_n(p)+\delta_1
=
\lambda-\rho,
\]
and also that 
\[
\lambda_{n+m+1}(p+t\sigma)
\ge
\lambda_{n+m+1}(p)-|\lambda_{n+m+1}(p+t\sigma)-\lambda_{n+m+1}(p)|
>
\lambda_{n+m+1}(p)-\delta_2
=
\lambda+\rho.
\]
If \(n=0\), only the second inequality is needed, and one may take
\(
t_4:=\frac{\delta_2}{2\|\sigma\|_{L^\infty(X)}}.
\)
Now fix
\[
0<t<t_{\min}:=\min\{t_0,t_1,t_2,t_3,t_4\},
\]
and set
\(
q:=p+t\sigma.
\)
Since $t<t_2,t_3$, we have 
\(
q\in U\cap A_n\,,
\)
and since $t<t_4$, 
\(
\lambda_n(q)<\lambda-\rho
\)
if $n \ge 1$, and
\(
\lambda_{n+m+1}(q)>\lambda+\rho.
\)
Moreover, for  $t<t_0$, the interval
\(
I:=(\lambda-\rho,\lambda+\rho)
\)
contains at least  the $m$ eigenvalues
\[
\mu_1(t),\dots,\mu_m(t)
\]
of the operator
\[
T(t)=L_p+tM_\sigma=L_q,
\]
counted with multiplicity. Actually, we will show that $I$ contains exactly $m$ eigenvalues of $L_q$: it is easy to see that there are exactly $n$ eigenvalues of $L_q$ below the interval $I$.
Indeed, if $n\ge 1$, then $\lambda_n(q)<\lambda-\rho$, so there are at least $n$
eigenvalues below $I$; on the other hand, if there were more than $n$ eigenvalues below $I$,
then together with the (at least) $m$ eigenvalues lying in $I$ this would give at least $n+m+1$
eigenvalues strictly below $\lambda+\rho$, contradicting
\[
\lambda_{n+m+1}(q)>\lambda+\rho.
\]
Hence there are exactly $n$ eigenvalues of $L_q$ below $I$, and since also $\lambda_{n+m+1}(q) \notin I$, we see that there are exactly $m$ eigenvalues of $L_q$ inside $I$. The case $n=0$ is simpler.
Therefore the eigenvalues of $L_q$ lying in $I$ are precisely
\[
\lambda_{n+1}(q),\lambda_{n+2}(q),\dots,\lambda_{n+m}(q),
\]
counted with multiplicity, and so, for a fixed $0<t<t_{\min}$, one has  $\{\lambda_{n+1}(q), \cdots,\lambda_{n+m}(q) \}=\{\mu_1(t),\cdots, \mu_{m}(t)\}$ as multisets. In particular,
\(
\lambda_{n+1}(q)
\)
is the smallest eigenvalue of $L_q$ lying in $I$.
Now, for a fixed $0<t<t_{\min}$, consider $\nu(t)$ as in \eqref{eq:nu(t)}.
Since
\[
\mu_j(t)<\mu_k(t)
\qquad\text{for all }1\le j\le r<k\le m,
\]
the minimum $\nu(t)$ can only be attained among
\(
\mu_1(t),\dots,\mu_r(t).
\)
Since the multiset of eigenvalues of $L_q$ lying in $I$ is exactly
\[
\{\mu_1(t),\dots,\mu_m(t)\},
\]
the multiplicity of the eigenvalue $\nu(t)$ of $L+q$ is exactly $\#J(t)$ where $J(t)$ is as in \eqref{eq:J(t)}.
But $\nu(t)$ is the smallest eigenvalue of $L_q$ in $I$, so
\[
\nu(t)=\lambda_{n+1}(q).
\]
Therefore
\[
\operatorname{mult}\bigl(\lambda_{n+1}(q)\bigr)=\#J(t)\le r<m.
\]
Hence the multiplicity of \(\lambda_{n+1}(q)\) is strictly smaller than that of
\(\lambda_{n+1}(p)\), and the proof is complete.
\end{proof}

\begin{lemma}\label{lem:dense}
 Let
\(
L:\Dom(L)\subset L^2(X)\to L^2(X)
\)
be self-adjoint, semibounded and  with compact resolvent.  Assume that the pair $(L,\cQ)$ satisfies the hypothesis {\rm (H)}. Then, for every $n\ge 0$, the set $A_{n+1}$ is dense in $A_n$.
\end{lemma}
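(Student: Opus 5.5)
The proof iterates Lemma~\ref{lem:reduce_mult_abstract} finitely many times, using the openness of $A_n$ from Lemma~\ref{lem:An-open}. Fix $n\ge 0$, a point $p\in A_n$, and a neighbourhood $U$ of $p$ in $\cQ$. We must produce $q\in U\cap A_{n+1}$. Since $A_n$ is open, we may replace $U$ by $U\cap A_n$, which is still a neighbourhood of $p$. So we may assume $U\subset A_n$.

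We argue by induction on $m(p):=\dim\Ker(L_p-\lambda_{n+1}(p)I)$, proving the following claim for every $k\ge1$: for every $p\in A_n$ with $m(p)\le k$ and every neighbourhood $U\subset A_n$ of $p$, the set $U\cap A_{n+1}$ is non-empty.

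\emph{Base case $k=1$.} Here $\lambda_{n+1}(p)$ is simple. Since $p\in A_n$, the first $n$ eigenvalues are simple and lie strictly below $\lambda_{n+1}(p)$, and $\lambda_{n+1}(p)<\lambda_{n+2}(p)$. Hence $p\in A_{n+1}$ and we take $q=p$.

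\emph{Inductive step.} Suppose $m(p)=m\ge 2$. Lemma~\ref{lem:reduce_mult_abstract} applies because $(L,\cQ)$ satisfies (H) and $p\in A_n$. It yields $p'\in U\cap A_n$ with $m(p')<m$. Since $U$ is open, it is also a neighbourhood of $p'$, and it is still contained in $A_n$. The induction hypothesis applied to $p'$ and $U$ then gives a point of $U\cap A_{n+1}$.

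Because $m(p)$ is finite (compact resolvent), at most $m(p)-1$ applications of the lemma are needed. This proves that every neighbourhood of every point of $A_n$ meets $A_{n+1}$, that is, $A_{n+1}$ is dense in $A_n$.

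The only point requiring care is that the neighbourhood must be kept open (or shrunk to an open one at the start) so that it remains a neighbourhood of each intermediate point $p'$. The set must also be kept inside $A_n$, so that Lemma~\ref{lem:reduce_mult_abstract} is applicable again at $p'$. Both conditions hold by the initial reduction to an open $U\subset A_n$, and the rest is routine. The case $n=0$ is included, since $A_0=\cQ$.
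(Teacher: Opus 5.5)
Your proposal is correct and follows the paper's proof. Both arguments iterate Lemma~\ref{lem:reduce_mult_abstract}, using the openness of $A_n$ from Lemma~\ref{lem:An-open}, until the multiplicity of $\lambda_{n+1}$ drops to one; your strong induction on $m(p)$ is a formal version of the paper's ``the process terminates after finitely many steps.'' Replacing $U$ at the outset by an open neighbourhood contained in $A_n$ also makes explicit something the paper leaves tacit when it says that $U\cap A_n$ is an open neighbourhood of $q_1$ — that claim needs $U$ itself to be open.
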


\begin{proof}
Fix $p\in A_n$ and let $U$ be a neighborhood of $p \in \cQ$. If $\lambda_{n+1}(p)$ is already simple,
then $p\in U\cap A_{n+1}$ and there is nothing to prove. Otherwise, apply Lemma~\ref{lem:reduce_mult_abstract} to find
$q_1\in U\cap A_n$ such that the multiplicity of $\lambda_{n+1}(q_1)$ is strictly smaller than that of
$\lambda_{n+1}(p)$. If $\lambda_{n+1}(q_1)$ is still not simple, then $q_1\in A_n$ and $A_n$ is open, so
$U\cap A_n$ is an open neighborhood of $q_1$; applying Lemma~\ref{lem:reduce_mult_abstract} again, we get
$q_2\in U\cap A_n$ with strictly smaller multiplicity for $\lambda_{n+1}(q_2)$. This process terminates after finitely many steps, producing some
\[
q\in U\cap A_n
\]
for which $\lambda_{n+1}(q)$ is simple. Then $q\in U\cap A_{n+1}$, which proves that $A_{n+1}$ is dense in $A_n$.
\end{proof}

\begin{remark}
\label{rem:localized-density}
In view of Remark \ref{rem:localized-ucp}, assume that  \(\omega\subset X^\circ\) is open and 
\[
C_c^\infty(\omega,\mathbb R)\subset \mathcal{Q}.
\]
If \((\mathrm{UCP}_\omega)\) holds for every eigenspace of every operator
\(L_q\), then the density step in Lemma~\ref{lem:dense} can be performed with
perturbations supported in \(\omega\).  More precisely, for every \(n\geq0\),
every \(p\in A_n\), and every neighborhood \(\mathcal U\) of \(p\) in \(\mathcal{Q}\),
there exists
\(
q\in \mathcal U\cap A_{n+1}
\)
such that
\(
q-p\in C_c^\infty(\omega,\mathbb R).
\)
Equivalently,
\(
\operatorname{supp}(q-p)\Subset\omega.
\)
\end{remark}

\subsection{The main abstract theorem}

\begin{theorem}
\label{thm:abstract}
Let
\(
L:\Dom(L)\subset L^2(X)\to L^2(X)
\)
be self-adjoint, semibounded and  with compact resolvent.  Assume that the pair $(L,\cQ)$ satisfies the hypothesis {\rm (H)}.   Then the set
\[
\cG_{\cQ}:=
\Bigl\{q\in \cQ:\text{all eigenvalues of }L+M_q\text{ are simple}\Bigr\}
\]
is residual in $\cQ$.
\end{theorem}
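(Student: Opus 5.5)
The plan is to write $\cG_{\cQ}=\bigcap_{n\ge1}A_n$ and show that each $A_n$ is open and dense in $\cQ$; residuality is then immediate from the definition (a countable intersection of open dense sets). The identity $\cG_{\cQ}=\bigcap_n A_n$ holds because, by Lemma~\ref{lem:bounded-perturbations}, each $L_q$ is self-adjoint, semibounded and has compact resolvent. Its spectrum is therefore the sequence $\lambda_1(q)\le\lambda_2(q)\le\cdots$, and every eigenvalue appears as some $\lambda_k(q)$. So all eigenvalues are simple exactly when $\lambda_1(q),\dots,\lambda_n(q)$ are simple for every $n$.

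Openness of each $A_n$ is Lemma~\ref{lem:An-open}. For density I will prove by induction on $n$ that $A_n$ is dense in $\cQ$. The base case is $A_0=\cQ$. For the inductive step, suppose $A_n$ is dense, and let $W\subset\cQ$ be a nonempty open set. Then $W\cap A_n$ is nonempty and open, since $A_n$ is open. Pick $p\in W\cap A_n$. Lemma~\ref{lem:dense}, which uses Hypothesis~(H), states that $A_{n+1}$ is dense in $A_n$. Applying it with the neighbourhood $W$ of $p$ gives a point of $A_{n+1}\cap W$. Hence $A_{n+1}$ is dense in $\cQ$.

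One subtlety needs checking here: in Lemma~\ref{lem:dense}, ``dense in $A_n$'' must mean that every $\cQ$-neighbourhood of a point of $A_n$ meets $A_{n+1}$. That is exactly how its proof is phrased, so the step is fine. With both properties in hand, $\cG_{\cQ}$ is a countable intersection of open dense subsets of $\cQ$, hence residual. Since $\cQ$ is Fréchet and thus completely metrizable, the Baire category theorem also gives that $\cG_{\cQ}$ is dense, as remarked in the introduction.

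The substantive work, namely the multiplicity-reduction step of Lemma~\ref{lem:reduce_mult_abstract} based on Hypothesis~(H) and Kato's analytic perturbation theory, is already done. The only real point requiring care is the one above: making sure that density of $A_{n+1}$ relative to $A_n$, combined with openness of $A_n$, passes to density in all of $\cQ$.
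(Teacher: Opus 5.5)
Your proposal is correct and is essentially the paper's proof: write $\cG_{\cQ}=\bigcap_{n\ge1}A_n$, get openness from Lemma~\ref{lem:An-open} and density by induction from Lemma~\ref{lem:dense} starting at $A_0=\cQ$, then conclude residuality and use Baire for density. Your explicit induction step (pick $p\in W\cap A_n$ using density of $A_n$, then apply Lemma~\ref{lem:dense} with the neighbourhood $W$) and your justification of $\cG_{\cQ}=\bigcap_n A_n$ simply spell out details the paper leaves implicit.
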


\begin{proof}
By Lemma~\ref{lem:An-open}, each \(A_n\) is open in \(\mathcal Q\).
By Lemma~\ref{lem:dense}, \(A_{n+1}\) is dense in \(A_n\) for every
\(n\geq 0\). Since \(A_0=\mathcal Q\), it follows by induction that
each \(A_n\) is open and dense in \(\mathcal Q\). Moreover,
\[
\mathcal G_{\mathcal Q}=\bigcap_{n\geq 1}A_n .
\]
Hence \(\mathcal G_{\mathcal Q}\) is a countable intersection of open
dense subsets of \(\mathcal Q\), and is therefore residual in
\(\mathcal Q\). Since \(\mathcal Q\) is a Fr\'echet space, it is a Baire
space. Consequently, by the Baire category theorem,
\(\mathcal G_{\mathcal Q}\) is dense in \(\mathcal Q\).
\end{proof}

\begin{remark}
\label{rem:standard-settings-old}
Below are geometric settings, and corresponding admissible perturbation spaces $\cQ$ for which hypothesis {\rm (H)} is automatic:
    \begin{itemize}
        \item In the case where $X=M$ is a compact manifold $M$ without boundary, the natural choice for the admissible perturbation space is $\cQ=C^{\infty}(M,\R)$, endowed with its usual Fr\'echet topology. Clearly,
        \[
        C_c^\infty(M,\R)=C^\infty(M,\R)=\cQ.
        \]

        \item In the case where $X=M$ is a compact manifold with boundary and interior $M^\circ$, the natural choice for the admissible perturbation space is $\cQ=C^\infty(M,\R)$, endowed with its usual Fr\'echet topology. We have
        \[
        C_c^\infty(M^\circ,\R)\hookrightarrow C^\infty(M,\R)=\cQ.
        \]
        If $M$ is realised as the closure $M=\overline{\Omega}$ of a bounded regular domain $\Omega\subset \R^n$, one may equivalently write
        \[
        \cQ=C^\infty(\overline{\Omega},\R),
        \]
        and then
        \[
        C_c^\infty(\Omega,\R)\hookrightarrow C^\infty(\overline{\Omega},\R).
        \]

        \item In the case where $X=\Omega\subset\R^n$ is a bounded domain, possibly with non-regular boundary, a basic choice for the admissible perturbation space is
        \[
        \cQ=L^\infty(\Omega,\R),
        \]
        endowed with the norm
        \[
        \norm{q}_{L^\infty(\Omega)}:=\operatorname*{ess\,sup}_{x\in\Omega}|q(x)|.
        \]
        We have
        \[
        C_c^\infty(\Omega,\R)\hookrightarrow L^\infty(\Omega,\R)=\cQ.
        \]
        Thus in the non-smooth boundary setting one does not impose any regularity up to the boundary on the perturbation.

        \item In the case of a non-compact domain $X \subset \R^n$ the basic example for $\cQ$ is the space
        \[
        C_b^\infty(X,\R)
        :=
        \{q\in C^\infty(X,\R):\partial^\alpha q\in L^\infty(X)\text{ for every multi-index }\alpha\},
        \]
        endowed with the seminorms
        \[
        \norm{q}_m:=\max_{|\alpha|\le m}\norm{\partial^\alpha q}_{L^\infty(X)}.
        \]
        We have
        \[
        C_c^\infty(X,\R)\hookrightarrow C_b^\infty(X,\R)=\cQ.
        \]
    \end{itemize}

    Therefore, in all the above settings, Proposition \ref{prop:automatic-splitting} shows that hypothesis {\rm (H)} is automatically satisfied. In particular, in the regular boundary case one may work with perturbations that are smooth up to the boundary, whereas in the non-regular boundary examples of Section~\ref{sec:Mwithboubndary} one works with bounded perturbations in $L^\infty(\Omega,\R)$.  In particular, if the operator
    \[
    L:\Dom(L)\subset L^2(X)\to L^2(X)
    \]
    is self-adjoint, semibounded and with compact resolvent, then the abstract Theorem \ref{thm:abstract} applies to the pair \((L, \mathcal Q)\).
\end{remark}

\section{Applications}\label{sec:applications}

In this section we apply the abstract theorem, Theorem~\ref{thm:abstract}, to several classes of operators on compact manifolds without boundary, compact manifolds with boundary, bounded domains, and non-compact manifolds.

\subsection{On compact manifolds without boundary}\label{subsec:non-elliptic}

Here the underlying setting is a compact manifold $M$ without boundary and the space of admissible perturbations is $\cQ=C^{\infty}(M,\R)$.
\subsubsection{Sub-Laplacians and Courant's nodal theorem}

For a fixed positive smooth density $d\mu$  the sub-Laplacian $\LL_{\rm sub}$ on $M$ is defined by
\[
\mathcal L_{\rm sub}:=\sum_{i=1}^m X_i^*X_i\,,
\]
where  $X_1,\dots,X_m\in \mathfrak X^\infty(M)$ are smooth vector fields on $M$, and $X^*$ stands for the formal adjoint of  $X\in \mathfrak X^\infty(M)$ defined by
\[
\int_M (Xu)vd\mu=\int_M u(X^{*}v)d\mu,
\qquad \text{for all }u,v\in C^{\infty}(M,\R).
\]
One then sees that $\LL_{\rm sub}$  satisfies
\[
(\mathcal L_{\rm sub}u,u)_{L^2(M)}=\sum_{i=1}^m \norm{X_i u}_{L^2(M)}^2,
\qquad u\in C^{\infty}(M,\R)\,.
\]
The vector fields $X_1,\dots,X_m\in \mathfrak X^\infty(M)$ satisfy the so-called H\"ormander condition introduced in \cite{Hormander67}. Let $k \in \N^{*}$ denote the smallest number of iterated commutators of the vector fields $X_1,\dots,X_m$ that is needed to generate $T_xM$ at any $x \in M$.  H\"ormander proved that  the operator $\mathcal L_{\rm sub}$ is hypoelliptic  \cite[Theorem~1.1]{Hormander67} and also provided a sub-elliptic estimate with loss. Later on, Rothschild and Stein \cite[Theorem~17 and estimate~(17.20), p.~311]{RothschildStein} proved the following optimal  sub-elliptic estimate: there exists $C>0$ such that for every $u\in C^{\infty}(M,\R)$,
\begin{equation}\label{eq:sub-elliptic-estimate}
\norm{u}_{H^{2/k}(M)}^2 \le C\left(\norm{\mathcal L_{\rm sub}u}_{L^2(M)}^2 + \norm{u}_{L^2(M)}^2\right)\,.
\end{equation}
To be precise, the sub-elliptic estimate \eqref{eq:sub-elliptic-estimate} appears in \cite{RothschildStein} in a local form. However, it is not difficult to globalise the argument on the whole $M$, see e.g. \cite[Theorem 1.4]{LaurentLeautaud}. 

Moreover, since $\mathcal L_{\rm sub}$ is symmetric and nonnegative,
hypoellipticity of $\mathcal L_{\rm sub}+I$ and compactness of $M$ imply that $\mathcal L_{\rm sub}$ is essentially self-adjoint;
see \cite[Theorem~X.26]{RS2}. Hence it admits a self-adjoint extension with 
\[
\mathcal L_{\rm sub}: \Dom(\mathcal{L}_{\rm sub}) \subset L^2(M) \longrightarrow L^2(M)\,.
\]
We keep the notation $\mathcal L_{\rm sub}$ for its self-adjoint extension, and we note that $\LL_{\rm sub}$ has compact resolvent, and there exists an orthonormal basis of eigenfunctions
$(\varphi_j)_{j\in\N}$ of $L^2(M)$ and a nondecreasing sequence of real eigenvalues $(\lambda_j)_{j\in\N}$, multiplicity counted,  such that
\[
\mathcal L_{\rm sub}\varphi_j=\lambda_j\varphi_j,
\qquad
(\varphi_i,\varphi_j)_{L^2(M)}=\delta_{ij},
\qquad
0=\lambda_0<\lambda_1\le \lambda_2\le \cdots \to +\infty\,.
\]
 Moreover, each $\varphi_j \in C^{\infty}(M,\R)$ is smooth. Indeed, since $\varphi_j \in L^2(M)$ and so also $\LL_{\rm sub} \varphi_j \in L^2(M)$, a repeated  use of  the sub-elliptic estimate \eqref{eq:sub-elliptic-estimate} shows that $\varphi_j \in H^s(M)$ for arbitrary large $s$, and so $\varphi_j \in C^{\infty}(M,\R)$. Similar arguments show that if \(q\in C^\infty(M,\mathbb R)\), and  \(u\in L^2(M)\)  satisfies $(\LL_{\rm sub}+q)u=\lambda u$ for 
  some \(\lambda\in\mathbb R\), then $u\in C^\infty(M)$, i.e. the eigenfunctions of $\LL_{\rm sub}+q$ are smooth.

\begin{proposition}\label{cor:courant_sub}
Let \(\LL=\LL_{\rm sub}\) be the sub-Laplacian associated with the vector fields
\(X_1,\dots,X_m\). Then the set
\[
\mathcal C_{\rm sub}
:=
\Bigl\{q\in C^\infty(M,\mathbb R):
\text{ every eigenfunction of }\LL_{\rm sub}+M_q
\text{ satisfies Courant's nodal theorem}\Bigr\}
\]
is residual in \(C^\infty(M,\mathbb R)\).
\end{proposition}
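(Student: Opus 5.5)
The plan is to prove the inclusion $\mathcal G_{\mathcal Q}\subset\mathcal C_{\rm sub}$, where $\mathcal Q=C^\infty(M,\mathbb R)$, and then to conclude from Theorem~\ref{thm:abstract}. A superset of a residual set is residual, so this suffices.

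\textbf{Step 1: simplicity is generic.} Take $L=\LL_{\rm sub}$, which is self-adjoint, nonnegative and has compact resolvent, as recalled above. Since $M$ has no boundary, $C_c^\infty(M^\circ,\mathbb R)=C^\infty(M,\mathbb R)=\mathcal Q$. By Corollary~\ref{cor:automatic-splitting}, the pair $(\LL_{\rm sub},\mathcal Q)$ satisfies (H). By Lemma~\ref{lem:bounded-perturbations} and Theorem~\ref{thm:abstract}, the set $\mathcal G_{\mathcal Q}$ of potentials $q$ for which all eigenvalues of $\LL_{\rm sub}+M_q$ are simple is residual in $C^\infty(M,\mathbb R)$.

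\textbf{Step 2: Courant's bound at simple eigenvalues.} Fix $q\in\mathcal G_{\mathcal Q}$ and an eigenfunction $u$ of $L_q$ with eigenvalue $\lambda_k(q)$. The eigenvalue is simple, so $u$ is a complex multiple of a real eigenfunction; indeed $L_q$ commutes with complex conjugation, so $\Re u$ and $\Im u$ lie in the one-dimensional eigenspace. By the regularity remark preceding the statement, $u$ is smooth, so its nodal set and nodal domains $\Omega_1,\dots,\Omega_N$ (connected components of $\{u\neq0\}$) make sense.

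Suppose $N\ge k+1$. Set $u_j:=u\,\mathbf 1_{\Omega_j}$. The key input is the restriction theorem \cite[Theorem~2.2]{FHCourant}, which gives $u_j\in\Dom(Q_{L_q})$ with
\[
Q_{L_q}(u_j)=\lambda_k(q)\|u_j\|^2 .
\]
Take the combinations of $u_1,\dots,u_k$. They span a $k$-dimensional space on which the Rayleigh quotient is $\le\lambda_k(q)$. Choose a nonzero combination $v$ orthogonal to the first $k-1$ eigenfunctions. Then the min--max principle forces $v$ to be an eigenfunction with eigenvalue $\lambda_k(q)$.

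\textbf{Step 3: the contradiction.} The function $v$ vanishes on $\Omega_{k+1}$, which is a nonempty open set, while $v\neq0$. Simplicity of $\lambda_k(q)$ gives $v=cu$, and $u\neq0$ on $\Omega_{k+1}$ by definition of the nodal domain, so $c=0$. This contradicts $v\neq0$. Hence $N\le k$ for every eigenfunction, i.e.\ $q\in\mathcal C_{\rm sub}$.

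The simplicity hypothesis is exactly what replaces the unique-continuation property here, which may fail for sub-Laplacians. The remaining care concerns eigenvalue indexing: when the eigenvalue is simple, the label $k$ of the eigenfunction is unambiguous, which is precisely the situation in which the sharp Courant bound holds.

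\textbf{Main obstacle.} I expect the delicate point to be justifying that $u\mathbf 1_{\Omega_j}$ lies in the form domain $\{w\in L^2: X_iw\in L^2\}$ with $X_i(u\mathbf 1_{\Omega_j})=\mathbf 1_{\Omega_j}X_iu$, for arbitrary (possibly irregular) nodal domains. I would not reprove this but cite \cite[Theorem~2.2]{FHCourant}, checking that its hypotheses cover the smooth potential $q$. This is harmless, since $M_q$ is bounded and does not change the form domain.
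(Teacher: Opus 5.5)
Your proof is correct and follows the paper's route. You get generic simplicity from Theorem~\ref{thm:abstract}, with Hypothesis~(H) supplied by Corollary~\ref{cor:automatic-splitting}, and then the Courant bound from the restriction theorem \cite[Theorem~2.2]{FHCourant}. The only difference is that the paper quotes the bound $k+\operatorname{mult}(\lambda_k)-1$ of \cite[Theorem~1]{EL} and specializes it to multiplicity one, whereas you reprove that case directly, with simplicity taking the place of unique continuation.
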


\begin{proof} The proof follows from Theorem \ref{thm:abstract} and  \cite[Theorem 1]{EL}  which says that for any $n \in \N$, any eigenfunction of $\mathcal L_{\rm sub}$ with eigenvalue
$\lambda_k$ has at most $k+{\rm mult}(\lambda_k)-1$ nodal domains, where ${\rm mult}(\lambda_k)$ denotes the multiplicity of $\lambda_k$. Let us recall that the key point is to verify in the sub-Riemannian case the so-called restriction lemma, see  \cite[Theorem 2.2]{FHCourant} and references therein.
\end{proof}

\subsubsection{Maximally hypoelliptic differential operators}

 We fix vector fields
\(
X_1,\ldots,X_m\in \mathfrak X^\infty(M)
\)
satisfying H\"ormander's bracket-generating condition.  We denote by
\(\mathcal F^\bullet\) the bracket filtration generated by these vector
fields in the sense of \cite[Definition 1.3]{AMY}.  Thus
\(
\mathcal F^0:=0,
\)
and, for \(j\geq 1\), we have 
\[
\mathcal F^j
:=
\operatorname{span}_{C^\infty(M)}
\left\{
[X_{i_1},[X_{i_2},\ldots,[X_{i_{\ell-1}},X_{i_\ell}]\ldots]]
:
1\leq \ell\leq j
\right\},
\]
with the convention that brackets of length \(1\) are just the vector fields
\(X_1,\ldots,X_m\).  In particular,
\[
\mathcal F^1
=
\operatorname{span}_{C^\infty(M)}\{X_1,\ldots,X_m\}.
\]
There exists \(N\in\mathbb N\) such that
\(
\mathcal F^N=\mathfrak X^\infty(M).
\)
Moreover,
\[
[\mathcal F^i,\mathcal F^j]\subset \mathcal F^{i+j},
\qquad i,j\geq 0,
\]
so \(\mathcal F^\bullet=(\mathcal F^1,\ldots,\mathcal F^N)\) is a filtered
foliation.

Although the operator is written as a polynomial in the first-layer vector
fields \(X_j\in\mathcal F^1\), the associated nilpotent model is determined
by the full bracket filtration \(\mathcal F^\bullet\).

The intrinsic definition of the Helffer--Nourrigat cone used here follows the modern formulation in \cite{AMY}; its original form can be found in \cite[p.~13]{HN79}. 

\begin{definition}[Helffer--Nourrigat cone]  For \(x\in M\), we  set
\[
I_x:=\{f\in C^\infty(M):f(x)=0\},
\qquad
\F_x^j:=\F^j/I_x\F^j,
\]
and define
\[
\mathfrak g_x
:=
\bigoplus_{j=1}^N \F_x^j/\F_x^{j-1}.
\]
Then $\mathfrak g_x$ is a graded nilpotent Lie algebra, and we denote by $G_x$ the associated simply connected
graded nilpotent Lie group. Following \cite[\S~1.4]{AMY}, if  $X \in \mathcal{X}^{\infty}(M)$ we denote by $\widetilde X \in  \mathcal{X}^{\infty}(M \times \R_{+})$ the vector field $\widetilde X(x,t)=(X(x),0)$.  The  adiabatic foliation associated to $\mathcal{F}^{\bullet}$  is
\[
a\F^\bullet
=
t\widetilde{\F}^1+t^2\widetilde{\F}^2+\cdots+t^N\widetilde{\F}^N,
\]
and the Helffer--Nourrigat cone at \(x\) is
\[
T_x^\ast\F^\bullet
:=
\left\{
\xi\in \mathfrak g_x^\ast :
(x,\xi,0)\in
\overline{T^\ast M\times \mathbb R_+^\ast}^{\,(a\F^\bullet)^\ast}
\right\}.
\]
Via Kirillov's orbit method, more precisely through the canonical identification
\(
G_x\backslash \mathfrak g_x^\ast \simeq \widehat{G_x},
\)
the image
\(
T_x^\ast\mathcal F^\bullet/G_x
\subset G_x\backslash \mathfrak g_x^\ast
\)
may be regarded as the corresponding closed dilation-invariant subset of the unitary dual
\(\widehat{G_x}\).
\end{definition}

\begin{characterisation}[Maximal hypoellipticity in the compact case, {\cite[Theorem C]{AMY}}]\label{char:maximally}
     Let $D: C^{\infty}(M, \C) \rightarrow C^{\infty}(M,\C)$ be a non-commutative polynomial of the form $ D
 =
 \sum_{|I|\leq k} a_I X_I,$ where $a_I\in C^\infty(M,\mathbb C)$. Then $D$ is called a maximally hypoelliptic operator if one of the following equivalent conditions is satisfied:
     \begin{itemize}
         \item for every (or for some) $s \in \R, D:  H_{X}^{s+k}(M) \rightarrow  H_{X}^{s}(M)$ is invertible modulo compact operators;
         \item for every (or for some) $s \in \R$ and for every distribution $ u: Du \in  H_{X}^s(M) \Rightarrow u \in  H_{X}^{s+k}(M)$;
         \item for every $x\in M$ and every non-trivial $\pi\in T_x^*\mathcal{F}^\bullet$, the principal symbol $\sigma_k(D,x,\pi)$ is injective on smooth vectors $C^\infty(\pi)$.
     \end{itemize}

    \end{characterisation}
   
Recall that the Sobolev space $H_{X}^{s}$, for $s \in \N$, is defined as 
\[
H_{X}^{s}(M):= \bigl\{\, u \in L^2(M):  X_I u \in L^2(M) \,\,\text{for all}\,\, |I|\leq s \, \bigr\}.
\]
The definition extends to any $s \in \R$ by interpolation ($s>0$) or  duality ($s<0$). 

\begin{proposition}\label{prop:maxhyp-apply}
  Let $D$ satisfy Characterisation~\ref{char:maximally}. Assume moreover that $D$ is formally self-adjoint on $C^\infty(M,\mathbb C)$ and  bounded from below.  Then, the  set
\[
\mathcal G_{D}
:=
\Bigl\{q\in C^\infty(M,\mathbb R):
\text{ all eigenvalues of }D+q\text{ are simple}\Bigr\}
\]
is residual in $C^\infty(M,\mathbb R)$.
\end{proposition}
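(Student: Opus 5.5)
The plan is to reduce Proposition~\ref{prop:maxhyp-apply} to Theorem~\ref{thm:abstract} with $X=M$, $\cQ=C^\infty(M,\R)$ and $L$ a suitable self-adjoint realisation of $D$. By Remark~\ref{rem:standard-settings-old} and Corollary~\ref{cor:automatic-splitting}, hypothesis (H) holds automatically because $C_c^\infty(M,\R)=C^\infty(M,\R)=\cQ$. It therefore suffices to prove that $D$, viewed as an operator on $L^2(M)$, admits a self-adjoint extension $L$ that is semibounded and has compact resolvent. Lemma~\ref{lem:bounded-perturbations} then transfers these properties to $L+M_q$, and the eigenvalues of $L+M_q$ are exactly those of $D+q$.

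\textbf{Self-adjointness.} Take $D$ with domain $C^\infty(M,\C)$. It is symmetric and bounded from below, say $D\ge -C$. I would show that it is essentially self-adjoint and identify the domain of the closure as $H^k_X(M)$, in analogy with the sub-Laplacian case via \cite[Theorem~X.26]{RS2}. For $\mu>C$ it suffices to show that $\Ker(D^*+\mu)=\{0\}$.
\begin{itemize}
\item If $u\in L^2(M)$ satisfies $(D^*+\mu)u=0$, then $(D+\mu)u=0$ in the sense of distributions, since $D$ is formally self-adjoint.
\item $D+\mu$ is still maximally hypoelliptic of order $k$, because adding a lower-order term does not change the principal symbol $\sigma_k$ in the third condition of Characterisation~\ref{char:maximally}.
\item The second condition, iterated over $s$, gives $u\in H^{s}_X(M)$ for all $s$. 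These spaces embed into the usual Sobolev spaces ($H^s_X\subset H^{s/N}$, as in the sub-elliptic estimate), so $u\in C^\infty(M)$.
\item Then $0=\langle (D+\mu)u,u\rangle\ge(\mu-C)\|u\|^2$, hence $u=0$.
\end{itemize}
The same regularity argument, applied to $u\in\Dom(D^*)$ with $Du\in L^2=H^0_X$, gives $\Dom(\overline D)=H^k_X(M)$. Semiboundedness of the closure follows from $D\ge -C$ on the core $C^\infty(M,\C)$.

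\textbf{Compact resolvent.} The resolvent $(\overline D+\mu)^{-1}$ maps $L^2(M)$ boundedly into $H^k_X(M)$. This uses the closed graph theorem together with the estimate $\|u\|_{H^k_X}\lesssim\|Du\|+\|u\|$, which is the first bullet of Characterisation~\ref{char:maximally}. Since $k\ge1$ and $H^k_X(M)\subset H^{k/N}(M)$ continuously, Rellich's theorem on the compact manifold $M$ makes the inclusion $H^k_X(M)\hookrightarrow L^2(M)$ compact. Hence the resolvent is compact, and Theorem~\ref{thm:abstract} gives the residuality of $\mathcal G_D$.

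\textbf{Main obstacle.} The main difficulty I expect is justifying the embedding $H^s_X(M)\subset H^{s/N}(M)$ for the spaces of \cite{AMY}, in particular for non-integer $s$. I would first try to cite the Rothschild--Stein estimate in the globalised form of \cite[Theorem~1.4]{LaurentLeautaud} and apply it to the $X_I$. Alternatively, it should suffice to use only the compactness of $H^1_X\hookrightarrow L^2$ (Hörmander's subelliptic estimate) together with $H^k_X\subset H^1_X$. Smoothness of $u$ is needed only to justify the pairing $\langle (D+\mu)u,u\rangle$, and $u\in H^k_X$ already suffices for that.
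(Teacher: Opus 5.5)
Your proposal is correct and follows the paper's proof step for step. You and the paper both use essential self-adjointness of $D$ on $C^\infty(M,\C)$, boundedness of the resolvent $L^2(M)\to H^k_X(M)$ via the regularity bullet of Characterisation~\ref{char:maximally} and the closed graph theorem, compactness of $H^k_X(M)\hookrightarrow L^2(M)$, and then Theorem~\ref{thm:abstract}, with (H) automatic because $C_c^\infty(M,\R)=C^\infty(M,\R)$.

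The only difference is that you prove essential self-adjointness by hand, via $\Ker(D^*+\mu)=\{0\}$ and the criterion \cite[Theorem~X.26]{RS2}, as the paper itself does for $\LL_{\rm sub}$. For $D$, the paper instead cites the $*$-maximal hypoellipticity results \cite[Theorem~3.44(b), Corollary~3.45(a)]{AMY}. Your fallback $H^k_X\subset H^1_X\hookrightarrow L^2$ also sidesteps the embedding issue you were worried about.
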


\begin{proof}
 Since $D$ is maximally hypoelliptic,  \cite[Theorem 3.43(a)]{AMY} gives   injectivity of its principal symbol. Since $D$ is formally symmetric, the same applies to the formal adjoint, and  \cite[Theorem 3.44(b)]{AMY} implies that $D$ is  $*$-maximally hypoelliptic. Hence \cite[Corollary~3.45(a)]{AMY} implies that $D$ is essentially self-adjoint
on $L^2(M)$. We keep the same notation for its self-adjoint extension. Since $D$ is bounded from below, choose   $z<\inf \sigma(D)$.  Then $z\in \rho(D)$ is an element of the resolvent set of $D$, and so the operator
\[
D-z:\Dom(D)\subset L^2(M)\to L^2(M)
\]
is bijective. We now show that
\begin{equation}\label{eq:inv.D-z}
(D-z)^{-1}:L^2(M)\to H_X^k(M)
\end{equation}
is bounded. Let $f\in L^2(M)= H_{X}^0(M)$, and set $u:=(D-z)^{-1}f.$ Then $(D-z)u=f\in  H_{X}^0(M).$
Since  the operator $D-z$ trivially has the same principal symbol
as $D$ it is also maximally hypoelliptic by
\cite[Theorem~3.43(a)]{AMY}, and so by  Characterisation \ref{char:maximally} $u\in  H_{X}^k(M).$
Thus $(D-z)^{-1}$ is well-defined as a map
\[
L^2(M)\to  H_{X}^k(M).
\]
To prove boundedness, let $f_n\to f$ in $L^2(M)$, and assume that
\[
(D-z)^{-1}f_n\to u
\qquad\text{in } H_{X}^k(M).
\]
Set $u_n:=(D-z)^{-1}f_n.$ Since the inclusion $H_{X}^k(M)\hookrightarrow L^2(M)$ is continuous, we also have $u_n\to u$ in $L^2(M)$.
Moreover,
\[
(D-z)u_n=f_n\to f
\qquad\text{in }L^2(M).
\]
Because $D-z$ is closed (being self-adjoint), it follows that
\[
u\in \Dom(D)
\qquad\text{and}\qquad
(D-z)u=f.
\]
Thus $u=(D-z)^{-1}f$, and the graph of \eqref{eq:inv.D-z} is closed. By the closed graph theorem 
$(D-z)^{-1}$ is bounded.
Now by \cite{Hormander67} the inclusion \(
 H_{X}^k(M)\hookrightarrow L^2(M)
\) is compact. A combination of the latter together with the fact that the operator \eqref{eq:inv.D-z} is bounded, implies that $D$ has compact resolvent. The conclusion then follows by Theorem \ref{thm:abstract}, and the proof is complete. 
\end{proof}

\subsubsection{Sums of squares with continuous coefficients}
The next example shows that, for H\"ormander-type divergence-form operators with uniformly elliptic coefficients, the abstract generic-simplicity theorem applies and one also obtains Courant's nodal domain bound for the generic potentials.

\begin{proposition}\label{prop:courant_horizontal_elliptic}
Let \(X_1,\dots,X_m\in \mathfrak X^\infty(M)\) be a H\"ormander system of
smooth vector fields on a compact manifold \(M\) without boundary, and let
\[
        A=(a_{ij})_{1\leq i,j\leq m}
        \in C^0(M;\operatorname{Sym}_m(\mathbb R)).
\]
Assume that \(A\) is uniformly elliptic in the \(X\)-directions in the sense of
\cite[p.~352]{Tralli2012}; namely, there exist constants \(0<c\leq C\) such that
\[
        c|\xi|^2
        \leq
        \sum_{i,j=1}^m a_{ij}(x)\xi_i\xi_j
        \leq
        C|\xi|^2,
        \qquad x\in M,\quad \xi\in\mathbb R^m .
\]
On \(H_X^1(M)\) we define the  form
\[
        \mathfrak q_{L_A}(u,v)
        :=
        \sum_{i,j=1}^m
        \int_M a_{ij}(x)(X_j u)\overline{X_i v}\,d\mu .
\]
We denote by
\(
        L_A:\operatorname{Dom}(L_A)\subset L^2(M)\longrightarrow L^2(M)
\)
the self-adjoint operator associated with $ \mathfrak q_{L_A}$. Formally,
\[
        L_A=\sum_{i,j=1}^m X_i^*a_{ij}(x)X_j .
\]
Then the set
\[
        \mathcal C_A
        :=
        \Bigl\{
        q\in C^\infty(M,\mathbb R):
        \text{  real-valued eigenfunctions of }L_A+M_q
        \text{ satisfy Courant's theorem}
        \Bigr\}
\]
is residual in \(C^\infty(M,\mathbb R)\).
\end{proposition}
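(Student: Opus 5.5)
The plan is to combine the abstract Theorem~\ref{thm:abstract} with a Courant-type bound for $L_A+M_q$. First we check that $L_A$ satisfies the standing assumptions.

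The form $\mathfrak q_{L_A}$ is defined on $H_X^1(M)$. By uniform ellipticity,
\[
c\sum_i\norm{X_iu}^2\le \mathfrak q_{L_A}(u,u)\le C\sum_i\norm{X_iu}^2 .
\]
So $\mathfrak q_{L_A}$ is nonnegative, and its form norm is equivalent to the $H_X^1(M)$ norm. Since $H_X^1(M)$ is complete, the form is closed. Symmetry of $A$ gives hermiticity, so Kato's representation theorem yields a self-adjoint $L_A\ge0$. For compactness of the resolvent, the form domain embeds continuously into $H^{1/k}(M)$: this is H\"ormander's estimate, $\norm{u}_{H^{1/k}}\lesssim \sum\norm{X_iu}+\norm u$. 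The embedding $H^{1/k}(M)\hookrightarrow L^2(M)$ is compact on compact $M$. Hence $(L_A+1)^{-1}$ is compact, as the composition of the bounded map $L^2\to H_X^1$ with a compact inclusion. Thus $L_A$ is self-adjoint, semibounded, with compact resolvent, and $\cQ=C^\infty(M,\R)\supset C_c^\infty(M^\circ,\R)$. Corollary~\ref{cor:automatic-splitting} and Theorem~\ref{thm:abstract} then show that $\cG_\cQ$, the set of $q$ for which all eigenvalues of $L_A+M_q$ are simple, is residual. Note that no smoothness of eigenfunctions is needed here, which matters because $A$ is only continuous.

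Next we show $\cG_\cQ\subset\mathcal C_A$. Fix $q\in\cG_\cQ$. The eigenvalues of $L_A+M_q$ are simple and the operator is real, since $A$ and $q$ are real. So each eigenspace is spanned by a real function $u_k$ with eigenvalue $\lambda_k$. We run the variational proof of Courant's theorem. Suppose $u_k$ had nodal domains $\Omega_1,\dots,\Omega_N$ with $N\ge k+1$. Set $w_j:=u_k\mathbf 1_{\Omega_j}$. These lie in $H_X^1(M)$ with $X_iw_j=\mathbf 1_{\Omega_j}X_iu_k$; this is the restriction lemma. Each satisfies $\mathfrak q(w_j,w_j)+\int q|w_j|^2=\lambda_k\norm{w_j}^2$. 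By min--max, a nonzero combination $w$ of $w_1,\dots,w_k$ orthogonal to $u_1,\dots,u_{k-1}$ is then an eigenfunction with eigenvalue $\lambda_k$. By simplicity, $w$ is a multiple of $u_k$. But $w$ vanishes on $\Omega_{k+1}$ while $u_k$ does not, which is a contradiction. This gives $N\le k$, and the same argument yields the bound $k+\mathrm{mult}(\lambda_k)-1$ in general.

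The main obstacle is the restriction lemma for $L_A$. We need a suitable notion of nodal domain for the eigenfunction, for instance via continuity of $u_k$, or via quasi-open sets. We also need $u\mathbf 1_{\Omega_j}\in H_X^1$ with the expected derivative. With continuous coefficients we lose smoothness. Instead we plan to invoke the H\"older continuity of weak solutions of $\sum X_i^*a_{ij}X_ju=f$ with $f\in L^\infty$; the relevant reference is Tralli's work \cite{Tralli2012}, which treats exactly this uniform ellipticity condition. That makes the nodal domains open. We will then adapt the argument of \cite[Theorem~2.2]{FHCourant}: approximate $u^+$ by $(u-\varepsilon)^+$, which lies in $H_X^1$ via the chain rule for Lipschitz truncations along vector fields, and pass to the limit. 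If the continuity result turns out to need more regularity, the fallback is to define nodal domains as connected components of $\{u\neq0\}$ for a quasi-continuous representative, and to use the truncation argument in $H_X^1$ directly.
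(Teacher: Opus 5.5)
Your proposal is correct and follows essentially the same route as the paper. The compact embedding $H_X^1(M)\hookrightarrow L^2(M)$ and uniform ellipticity give compact resolvent, Theorem~\ref{thm:abstract} gives residual simplicity, and continuity of weak solutions together with the restriction theorem \cite[Theorem~2.2]{FHCourant} yields Courant's bound. You spell out the variational argument that the paper only delegates to the sub-Laplacian case. One small correction: in the paper the continuity comes from the Harnack/H\"older theory for divergence-form H\"ormander operators with measurable coefficients (Chanillo--Wheeden \cite{CW86}, Lu \cite{Lu92}), while \cite{Tralli2012} is invoked only for the ellipticity condition. Also, since a priori you only know $u\in L^2$, you should treat $(q-\lambda_k)u$ as a bounded zeroth-order coefficient (or first prove local boundedness), not as an $L^\infty$ right-hand side.
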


\begin{proof}
By the H\"ormander condition, the embedding
\[
        H_X^1(M)\hookrightarrow L^2(M)
\]
is compact. The uniform ellipticity of \(A\) gives
\[
        c\sum_{j=1}^m\|X_j u\|_{L^2(M)}^2
        \leq
        \mathfrak q_{L_A}(u,u)
        \leq
        C\sum_{j=1}^m\|X_j u\|_{L^2(M)}^2,
\]
so that the form norm is equivalent to the norm of \(H_X^1(M)\).  
By Proposition~\ref{prop:compact-resolvent-bounded-perturbations} and the above, $L_A$ has compact resolvent, and so by
Theorem~\ref{thm:abstract} we get that
\[
        \mathcal G_A
        :=
        \bigl\{
        q\in C^\infty(M,\mathbb R):
        \hbox{ all eigenvalues of }L_A+M_q\hbox{ are simple}
        \bigr\}
\]
is residual in \(C^\infty(M,\mathbb R)\). Next we prove that
\[
        \mathcal G_A\subset \mathcal C_A .
\]
Let \(q\in\mathcal G_A\), and let \(u\) be a real-valued eigenfunction of
\(L_A+M_q\) associated with the \(k\)-th eigenvalue \(\lambda_k(q)\). Thus
\[
        u\in \operatorname{Dom}(L_A)\subset H_X^1(M)
\]
and
\[
        (L_A+M_q)u=\lambda_k(q)u
        \qquad\text{in }L^2(M).
\]
Then, by the definition of \(L_A\) through the closed form,
\[
        \sum_{i,j=1}^m X_i^*\bigl(a_{ij}X_j u\bigr)
        -
        (\lambda_k(q)-q)u
        =
        0\,,
\]
in the weak sense on \(M\).
By the Harnack inequality and the  Harnack-to-continuity implication
of Chanillo--Wheeden \cite[Theorem 5.3]{CW86}, see also Lu's
H\"ormander-vector-field framework \cite{Lu92} and his observation
\cite[Remark 4]{Lu1994},  weak solutions are continuous; hence \(u\in C^0(M)\). See also  \cite[Theorem 3.3]{FFZ11} for the extension of the latter results. Thus, for real-valued eigenfunctions, the nodal domains are well-defined as the
connected components of
\[
        M\setminus u^{-1}(\{0\}).
\] Finally, using the restriction theorem \cite[Theorem 2.2]{FHCourant}, and reasoning as before for the case of $\mathcal{L}_{\rm sub}$, we complete the proof.
\end{proof}

\subsection{On compact manifolds with boundary and  bounded domains}\label{sec:Mwithboubndary}
In this section  we include  examples of operators on non-smooth domains where Theorem \ref{thm:abstract} applies verbatim.  We start with an example of a class of elliptic operators of second order where the set of bounded potentials for which the corresponding eigenfunctions satisfy Courant's nodal theorem is residual.  

\subsubsection{Second order elliptic operators with H\"older coefficients}

We  consider the low-regularity elliptic setting studied by
Alessandrini~\cite{Al}. The point of this formulation is that the 
coefficients are only H\"older continuous. In dimension \(n\geq 3\), this is
below the regularity needed for the 
  proof of Courant's theorem based on unique continuation.

\begin{proposition}\label{cor:courant_alessandrini}
Let \(\Omega\subset\mathbb R^n\), \(n\geq 2\), be a bounded domain.
Let \(0<\alpha<1\), and assume that
\[
        A=(a_{ij})_{1\leq i,j\leq n}
        \in C^{0,\alpha}(\Omega;\operatorname{Sym}_n(\mathbb R))
\]
is real symmetric and uniformly elliptic. Thus there exist constants
\(0<\kappa\leq K\) such that
\[
        \kappa |\xi|^2
        \leq
        A(x)\xi\cdot \xi
        \leq
        K|\xi|^2,
        \qquad
        \xi\in\mathbb R^n,\quad \text{for a.e. }x\in\Omega .
\]
Let \(b,\rho\in L^\infty(\Omega,\mathbb R)\), with $\rho\geq \rho_0>0$ a.e. in $\Omega$.
Set
\[
        L^2_\rho(\Omega, \mathbb R):=L^2(\Omega, \mathbb R;\rho dx).
\]
Let
\[
        L_{\operatorname{ell}}
        :
        \operatorname{Dom}(L_{\operatorname{ell}})
        \subset L^2_\rho(\Omega,\mathbb R)
        \longrightarrow
        L^2_\rho(\Omega, \mathbb R)
\]
be the self-adjoint operator associated with the real form
\[
        \mathfrak a_0(u,v)
        :=
        \int_\Omega A\nabla u\cdot \nabla v\,dx
        +
        \int_\Omega b\,uv\,dx,
        \qquad
        \operatorname{Dom}(\mathfrak a_0)=H^1_0(\Omega,\mathbb R).
\]
Equivalently, in the weak sense,
\[
        L_{\operatorname{ell}}u
        =
        \rho^{-1}
        \bigl(-\operatorname{div}(A\nabla u)+bu\bigr).
\]
Then the set
\[
\mathcal C_{\operatorname{ell}}
:=
\Bigl\{
V\in L^\infty(\Omega,\mathbb R):
\text{  eigenfunctions in  } H^1_0(\Omega, \mathbb R) \text{  of   }
L_{\operatorname{ell}}+M_V
\text{ satisfy Courant's theorem}
\Bigr\}
\]
is residual in \(L^\infty(\Omega,\mathbb R)\).
\end{proposition}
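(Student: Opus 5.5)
The plan is to follow the same two-step pattern as for Proposition~\ref{cor:courant_sub} and Proposition~\ref{prop:courant_horizontal_elliptic}. First we show that the set $\mathcal G_{\rm ell}$ of potentials $V\in L^\infty(\Omega,\R)$ for which all eigenvalues of $L_{\rm ell}+M_V$ are simple is residual. Then we show that $\mathcal G_{\rm ell}\subset\mathcal C_{\rm ell}$, using Alessandrini's Courant-type bound \cite{Al}.

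One point has to be handled before Theorem~\ref{thm:abstract} applies: the Hilbert space here is the weighted space $L^2_\rho(\Omega)$, not $L^2(\Omega,dx)$.

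\begin{itemize}
\item The abstract section never uses that $d\mu$ is smooth. It only uses that $d\mu$ is a positive measure on $X^\circ=\Omega$ equivalent to Lebesgue measure, together with the fact that $C_c^\infty$ functions separate $L^1_{\rm loc}$ functions.
\item So we take $d\mu=\rho\,dx$, with $\rho_0\le\rho\le\|\rho\|_\infty$. Then $L^2_\rho$ and $L^2$ coincide as sets with equivalent norms.
\item In Proposition~\ref{prop:automatic-splitting}, the identity $\int\sigma\varphi\bar\psi\,\rho\,dx=0$ for all $\sigma\in C_c^\infty(\Omega,\R)$ still forces $\varphi\bar\psi=0$ a.e., since $\rho>0$.
\end{itemize}

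We work with complex $L^2_\rho(\Omega,\C)$, complexifying the real form $\mathfrak a_0$. Real eigenfunctions are then real or imaginary parts of complex ones.

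With $\mathcal Q=L^\infty(\Omega,\R)$, Hypothesis~(H) holds by Corollary~\ref{cor:automatic-splitting}. It remains to check that $L_{\rm ell}$ is self-adjoint, semibounded and has compact resolvent.

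\begin{itemize}
\item The form $\mathfrak a_0$ on $H^1_0(\Omega)$ is bounded below and closed in $L^2_\rho$. Indeed, uniform ellipticity gives $\mathfrak a_0(u,u)\ge\kappa\|\nabla u\|^2-\|b\|_\infty\|u\|^2$, and the form norm is equivalent to the $H^1$ norm.
\item Hence $L_{\rm ell}$ is self-adjoint and semibounded by the representation theorem.
\item Because $\Omega$ is bounded, Rellich gives that $H^1_0(\Omega)\hookrightarrow L^2(\Omega)$ is compact; no boundary regularity is needed for $H^1_0$. Norm equivalence then makes $H^1_0(\Omega)\hookrightarrow L^2_\rho$ compact too, so the resolvent is compact.
\end{itemize}

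Theorem~\ref{thm:abstract} then shows that $\mathcal G_{\rm ell}$ is residual.

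Now let $V\in\mathcal G_{\rm ell}$, and let $u\in H^1_0$ be an eigenfunction for $\lambda_k(V)$. It is a weak solution of
\[
-\operatorname{div}(A\nabla u)+(b+\rho V-\lambda_k\rho)u=0,
\]
which has the same structure with the bounded zeroth-order coefficient $b+\rho(V-\lambda_k)$. This class is exactly the one treated by Alessandrini, so his result applies. It says that an eigenfunction of the $k$-th eigenvalue has at most $k+\mathrm{mult}(\lambda_k)-1$ nodal domains, or at least that Courant holds for simple eigenvalues. The key ingredients are Hölder continuity of weak solutions (De Giorgi--Nash--Moser), which makes nodal domains well-defined open sets, and the restriction property that $u\chi_{D}\in H^1_0(\Omega)$ for each nodal domain $D$. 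Since $\mathrm{mult}(\lambda_k(V))=1$, we get at most $k$ nodal domains, which is Courant's theorem, so $V\in\mathcal C_{\rm ell}$. A residual set contains $\mathcal G_{\rm ell}$, so $\mathcal C_{\rm ell}$ is residual.

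The main obstacle is stating Alessandrini's result in exactly the form needed: general bounded zeroth-order term, the weight $\rho$, an arbitrary bounded domain, and bounds involving the multiplicity. If his theorem only covers $\rho\equiv1$ or $b\ge0$, I would reduce to it. To do so I would redo the variational Courant argument directly: use the restriction lemma on nodal domains, build a trial function from $k$ of them orthogonal to the first $k-1$ eigenfunctions, and apply min--max in $L^2_\rho$. That argument needs only continuity of $u$ and the restriction lemma, both valid for Hölder coefficients, and no unique continuation. Here simplicity replaces unique continuation: it pins the trial function, which attains $\lambda_k$, into the one-dimensional eigenspace, and that yields the contradiction.
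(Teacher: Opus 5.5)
Your proposal is correct and follows the paper's proof. Form methods and Rellich's theorem on $H^1_0(\Omega)$ make $L_{\operatorname{ell}}$ self-adjoint, semibounded and with compact resolvent in $L^2_\rho$; Theorem~\ref{thm:abstract}, applied to the complexification in the weighted space with Hypothesis~(H) from Corollary~\ref{cor:automatic-splitting}, makes $\mathcal G_{\operatorname{ell}}$ residual; and De Giorgi--Nash--Moser continuity together with the Pleijel--Alessandrini restriction argument gives $\mathcal G_{\operatorname{ell}}\subset\mathcal C_{\operatorname{ell}}$.

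Your fallback variational argument is also sound, and it makes explicit what the paper leaves to the citation: simplicity of $\lambda_k(V)$ forces the trial function built from $k$ nodal domains to be a multiple of $u$, which is impossible because it vanishes on the remaining nodal domain, so simplicity takes the place of unique continuation.
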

\begin{proof}
For \(V\in L^\infty(\Omega,\mathbb R)\), the operator
\(L_{\operatorname{ell}}+M_V\) is associated in \(L^2_\rho(\Omega, \mathbb R)\) with the
form
\[
        \mathfrak a_V(u,v)
        :=
        \mathfrak a_0(u,v)
        +
        \int_\Omega Vuv\,\rho dx,
        \qquad
        \operatorname{Dom}(\mathfrak a_V)=H^1_0(\Omega,\mathbb R).
\]
Equivalently,
\[
        \mathfrak a_V(u,v)
        =
        \int_\Omega A\nabla u\cdot \nabla v\,dx
        +
        \int_\Omega (b+\rho V)uv\,dx .
\]
The assumptions on \(A\), \(b\), and \(\rho\) imply that
\(\mathfrak a_0\) is closed and semibounded from below. Indeed, after adding
a sufficiently large multiple of the \(L^2_\rho\)-norm, the form norm is
equivalent to the usual \(H^1_0(\Omega, \mathbb R)\)-norm. Since \(\Omega\) is bounded,
the embedding
\[
        H^1_0(\Omega, \mathbb R)\hookrightarrow L^2_\rho(\Omega, \mathbb R)
\]
is compact. Hence \(L_{\operatorname{ell}}\) has compact resolvent. Applying
Theorem~\ref{thm:abstract} to the complexification of
\(L_{\operatorname{ell}}\) in \(L^2_\rho(\Omega,\mathbb C; \rho dx)\), and using
that the real and complex eigenspaces have the same multiplicities, gives
that the set
\[
        \mathcal G_{\operatorname{ell}}
        :=
        \Bigl\{
        V\in L^\infty(\Omega,\mathbb R):
        \text{ all eigenvalues of }L_{\operatorname{ell}}+M_V
        \text{ are simple}
        \Bigr\}
\]
is residual in \(L^\infty(\Omega,\mathbb R)\).
Note that, although the abstract theorem was stated using a fixed smooth density, it can also be applied directly in the weighted space
\(L^2_\rho(\Omega, \mathbb C; \rho dx)\). Next, we prove that
\[
        \mathcal G_{\operatorname{ell}}
        \subset
        \mathcal C_{\operatorname{ell}}.
\]
Let \(V\in\mathcal G_{\operatorname{ell}}\), and let \(u \in H_{0}^{1}(\Omega, \mathbb R)\) be an
eigenfunction of \(L_{\operatorname{ell}}+M_V\). Assume that \(u\) is
associated with the \(k\)-th eigenvalue, i.e., we have 
\[
        (L_{\operatorname{ell}}+M_V)u=\lambda_k(V)u,
\]
where the eigenvalues are counted with multiplicity. 
Then \(u\in H^1_0(\Omega, \mathbb R)\), and
\[
        \mathfrak a_V(u,\varphi)
        =
         \lambda_k(V)\int_\Omega u\varphi\,\rho dx,
        \qquad
        \varphi\in H^1_0(\Omega,\mathbb R).
       \]
       Equivalently,
\[
        -\operatorname{div}(A\nabla u)
        +
         d_{\lambda,V}u
        =
        0
        \qquad\text{in }\mathcal D'(\Omega),
\]
where $d_{\lambda,V}
        :=
        b+\rho V-\lambda_k(V)\rho \in L^\infty(\Omega,\mathbb R).$ Thus \(u\) solves a divergence-form uniformly elliptic equation with bounded
zeroth-order coefficient. By the De Giorgi--Nash--Moser interior H\"older
regularity theorem, see for instance Gilbarg--Trudinger
\cite[Theorem~8.24]{GT}, there exists
\(\beta\in(0,1)\) such that
\[
        u\in C^{0,\beta}_{\operatorname{loc}}(\Omega),
\]
and in particular, the nodal domains of \(u\) are well-defined as
open connected components of \(\Omega\setminus u^{-1}(\{0\})\).
By the nodal-domain restriction argument of Pleijel, in the low-regularity
form used by Alessandrini, see \cite[Theorem~1]{Plei56} and
\cite[Proposition~2.1 and Remark~2.2]{Al}, we see that  \(u\) has at most
\(k\) nodal domains, and this completes the proof.
\end{proof}

\subsubsection{Boundary realisations of the  Laplacian and of the magnetic Schr\"odinger operators}

\medskip

\begin{example}[Dirichlet Laplacian on bounded Lipschitz domains]\label{ex:Dir-Lapl}
   Let \(n\ge 2\), and let \(\Omega\subset\mathbb R^n\) be a bounded Lipschitz domain. For $V \in L^{\infty}(\Omega, \R)$, we denote by  \(H_{D,\Omega}=-\Delta+V\) the perturbed Dirichlet 
Laplacian with  domain 
\[
\operatorname{dom}(H_{D,\Omega})=
\{v\in H_0^1(\Omega):\Delta v\in L^2(\Omega;d^n x)\}.
\]
By  \cite[Theorem 3.8]{AGMT} the operator $H_{D,\Omega}$ is  self-adjoint, bounded from below, and has compact resolvent. Therefore the set
\[
\mathcal G_{D,\Omega}
:=
\{V\in L^\infty(\Omega, \R):\text{all eigenvalues of }H_{D,\Omega}\text{ are simple}\}
\]
is  residual in $L^{\infty}(\Omega, \R)$. 
\end{example}

\begin{example}[Neumann Laplacian on bounded Lipschitz domains]\label{ex:Neu-Lapl}
   Let \(n\ge 2\), and let \(\Omega\subset\mathbb R^n\) be a bounded Lipschitz  domain. For $V \in L^{\infty}(\Omega, \R)$, we denote by  \(H_{N,\Omega}=-\Delta+V\) the perturbed Neumann 
Laplacian with  domain 
\[
\operatorname{dom}(H_{N,\Omega})
=
\left\{
u\in H^1(\Omega)\,\middle|\,
\Delta u\in L^2(\Omega;d^n x),\;
\widetilde{\gamma}_N u = 0 \text{ in } H^{-1/2}(\partial\Omega)
\right\},
\]
where \(\widetilde{\gamma}_N\) denotes the weak Neumann trace operator
\[
\widetilde{\gamma}_N:
\left\{
u\in H^1(\Omega)\,\middle|\,\Delta u\in L^2(\Omega;d^n x)
\right\}
\longrightarrow H^{-1/2}(\partial\Omega).
\]
By  \cite[Theorem 3.9]{AGMT} the operator $H_{N,\Omega}$ is  self-adjoint, bounded from below, and has compact resolvent. Therefore the set
\[
\mathcal G_{N,\Omega}
:=
\{V\in L^\infty(\Omega, \R):\text{all eigenvalues of }H_{N,\Omega}\text{ are simple}\}
\]
is  residual in $L^{\infty}(\Omega, \R)$. 
\end{example}

\begin{example}[Robin Laplacian on an arbitrary open set of finite measure]
\label{ex:robin-arbitrary}
Let \(n\ge 2\), and let \(\Omega\subset \mathbb R^n\) be an arbitrary open set with
finite Lebesgue measure. Let \(\Gamma:=\partial\Omega\), and let \(\sigma\) denote the
\((n-1)\)-dimensional Hausdorff measure on \(\Gamma\). Following \cite[p.~17]{ArendtWarma}, define
\[
\Gamma_\sigma
:=
\{z\in \Gamma:\exists r>0 \text{ such that } \sigma(\Gamma\cap B(z,r))<\infty\},
\]
and let \(S\subset \Gamma\) be the maximal \(\sigma\)-admissible set, that is,
\(S\) is \(\sigma\)-admissible and
\[
\operatorname{Cap}_{\overline\Omega}(\Gamma_\sigma\setminus S)=0,
\]
where $\operatorname{Cap}_{\Omega}$ stands for the relative capacity with respect to $\Omega$.
Let $\beta\in L^\infty(S,\sigma)$, $\beta\ge \delta>0$. The Robin form on \(L^2(\Omega)\) is defined  by
\[
a_\beta(u,v)
=
\int_\Omega \nabla u\cdot \nabla v\,dx
+
\int_S u v\,\beta\,d\sigma,
\]
with domain
\[
D(a_\beta)
=
\Bigl\{
u\in H^1(\Omega)\cap C_c(\overline\Omega):
\int_S |u|^2\beta\,d\sigma<\infty
\Bigr\},
\]
see \cite[p.~17--18]{ArendtWarma}. Since \(\beta\,d\sigma\) is admissible,
the form \(a_\beta\) is closable, and  the operator
associated with \(a_\beta\) is selfadjoint; see
\cite[p.~18]{ArendtWarma}. Since
\[
a_\beta(u,u)
=
\int_\Omega |\nabla u|^2\,dx+\int_S |u|^2\beta\,d\sigma\ge 0,
\]
the operator
\[
L_{R,\beta}:=-\Delta_\beta
\]
is selfadjoint and bounded from below by \(0\). Moreover, by \cite[Theorem~5.1]{ArendtWarma}, the semigroup \(e^{t\Delta_\beta}\) is
ultracontractive under the assumption \(\beta\ge \delta>0\), and by
\cite[Corollary~5.2]{ArendtWarma}, if \(\Omega\) has finite measure then $e^{t\Delta_\beta}$ is compact for every $t>0$. Hence, see e.g.  \cite[Chapter~II, Theorem~4.29]{EngelNagel}, the generator \(\Delta_\beta\)
has compact resolvent, and therefore so does \(L_{R,\beta}=-\Delta_\beta\). Summarising \(L_{R,\beta}\) satisfies the operator hypotheses of Theorem \ref{thm:abstract} and so the set
\[
\mathcal G_{R,\beta,\Omega}
:=
\Bigl\{
q \in L^\infty(\Omega,\mathbb R):
\text{all eigenvalues of }L_{R,\beta}+q\text{ are simple}
\Bigr\}
\]
is residual in \(L^\infty(\Omega,\mathbb R)\).
\end{example}

\begin{remark}
If one wants the Robin condition on the whole boundary \(\partial\Omega\), rather than on
the maximal admissible subset \(S\), a sufficient assumption is that \(\Omega\) be
bounded with Lipschitz boundary. Indeed, \cite[Proposition~4.1]{ArendtWarma} states that
in this case the Hausdorff measure \(\sigma\) on \(\partial\Omega\) is admissible.
\end{remark}
\noindent In the examples below, concerning the Neumann and Dirichlet realizations of the magnetic Schr\"odinger operator in an open bounded regular domain $\Omega \subset \R^n$ we have collected the necessary properties of these operators from \cite{HelfferVienna} for the application of Theorem \ref{thm:abstract}.

\begin{example}[Neumann realisation of the magnetic Schr\"odinger operator]\label{ex:magnet.Sch.Neum}
    Let \(\Omega\subset \mathbb R^n\) be a bounded regular domain, let
\(
A=(A_1,\dots,A_n)\) be a $C^\infty$ vector field on $\overline{\Omega}$ and $V\in C^\infty(\overline\Omega,\mathbb R)$
and fix \(h>0\). Let
\[
P_{h,A,V,\Omega}
=
\sum_{j=1}^n (hD_{x_j}-A_j)^2+V(x)
\]
be the magnetic Schr\"odinger operator,
and denote by
\(
P^{N}_{h,A,V,\Omega},
\)
its Neumann realization. On the boundary $\partial \Omega$ we impose the  magnetic-Neumann boundary condition 
\[
\vec n\cdot(h\nabla-iA)u=0 \quad\text{on }\partial\Omega,
\]
\(\vec n\) denotes the normal vector to
\(\partial\Omega\). The Neumann realization $P^{N}_{h,A,V,\Omega}$ is the Friedrichs extension attached to the
quadratic form
\[
Q^N_{h,A,V,\Omega}(u)
:=
\int_\Omega \Bigl(|\nabla_{h,A}u|^2+V(x)|u(x)|^2\Bigr)\,dx,
\qquad
u\in C^\infty(\overline{\Omega};\mathbb C),
\]
with $\nabla_{h,A}=h\nabla-iA$. Since  \(\Omega\) is regular and bounded, the form domain of the operator $P^{N}_{h,A,V,\Omega}$ is
\(
V^N(\Omega)=H^1(\Omega),
\)
and the operator domain is
\[
D(P^N_{h,A,V,\Omega})
=
\Bigl\{
u\in H^2(\Omega):
\vec n\cdot (h\nabla-iA)u=0 \text{ on }\partial\Omega
\Bigr\},
\]
The operator  $P^{N}_{h,A,V,\Omega}$ is self-adjoint and we have 
\[
\int_\Omega \Bigl(|\nabla_{h,A}u|^2+V(x)|u(x)|^2\Bigr)\,dx
\ge -C\|u\|_{L^2(\Omega)}^2
\qquad \forall u\in C^\infty(\overline{\Omega}, \mathbb C),
\]
and so also  $P^N_{h,A,V,\Omega}$ is bounded from below. Moreover, in \cite[p. 8]{HelfferVienna} it is shown that $P^N_{h,A,V,\Omega}$ has compact resolvent. Therefore, by Theorem \ref{thm:abstract} the set \[
\mathcal G_N^{\mathrm{mag}}
:=
\Bigl\{
V\in  C^\infty(\overline\Omega,\mathbb R):
\text{all eigenvalues of }P^{N}_{h,A,V,\Omega}\text{ are simple}
\Bigr\}
\]
is residual in $C^\infty(\overline\Omega,\mathbb R)$. 
\end{example}

\begin{example}[Dirichlet  realisation of the magnetic Schr\"odinger operator]\label{ex:magnetic-Dir}
   Let $\Omega \subset \mathbb{R}^n$ be a bounded regular domain. For $P_{h,A,V,\Omega}$ as in Example \ref{ex:magnet.Sch.Neum} we define its Dirichlet realisation as the Friedrichs extension attached to the
quadratic form
\[
Q^{D}_{h,A,V,\Omega}(u):=
\int_\Omega \Bigl(|\nabla_{h,A}u|^2+V(x)|u(x)|^2\Bigr)\,dx,
\qquad
u\in C_0^\infty(\Omega;\mathbb C).
\]
 Its form domain is $V^D(\Omega)=H_0^1(\Omega)$, while the operator domain is $D(P^D_{h,A,V,\Omega})=H_0^1(\Omega)\cap H^2(\Omega)$.
The operator  $P^{D}_{h,A,V,\Omega}$ is self-adjoint and we have 
\[
\int_\Omega \Bigl(|\nabla_{h,A}u|^2+V(x)|u(x)|^2\Bigr)\,dx
\ge -C\|u\|_{L^2(\Omega)}^2
\qquad \forall u\in C_0^\infty(\Omega).
\]
So $P^D_{h,A,V,\Omega}$ is bounded from below.   Moreover, in \cite[p. 8]{HelfferVienna} it is shown that $P^D_{h,A,V,\Omega}$ has compact resolvent. Therefore, by Theorem \ref{thm:abstract} the set \[
\mathcal G_D^{\mathrm{mag}}
:=
\Bigl\{
V\in  C^\infty(\overline\Omega,\mathbb R):
\text{all eigenvalues of }P^{D}_{h,A,V,\Omega}\text{ are simple}
\Bigr\}
\]
is residual in $C^\infty(\overline\Omega,\mathbb R)$. 
\end{example}

\subsection{On non-compact manifolds}\label{subsec:noncompact}

\begin{example}[Harmonic and anharmonic oscillators]\label{ex:anharmonic} For $k \in \mathbb N^*$
    we define
\[
\PP_{2k}:=\big\{p:\R^n\rightarrow \R\, \text{ polynomial satisfying } \liminf_{|x|\rightarrow \infty}\frac{p(x)}{|x|^{2k}}>0 \big\}.
\]
Let $q\in \PP_{2\ell}$ and $p\in \PP_{2k}$ be real-valued. Then there exists $q_0,p_0>0$ such that
\[
q(\xi)+q_0\ge 1,
\qquad
p(x)+p_0\ge 1.
\]
Let $T_0$ be the positive self-adjoint realization on $L^2(\mathbb R^n)$ of
\[
T_0=q(D)+p(x)+q_0+p_0.
\]
By \cite[Corollary~5.4(b)]{CDR}, one has $T_0^{-1}\in S_r(L^2(\mathbb R^n))$ for some values of $r>0$ (depending on $k, \ell$),
and hence $T_0^{-1}$ is compact. Therefore $T_0$ has compact resolvent. Set
\[
T:=T_0-(p_0+q_0)I.
\]
 Then $T$ is self-adjoint, bounded from below, and has compact resolvent.
Additionally, by Theorem \ref{thm:abstract}, the set
\[
\mathcal G_{\mathrm{anh}}
:=
\Bigl\{V\in C_b^\infty(\mathbb R^n,\mathbb R):
\text{all eigenvalues of }T+V\text{ are simple}\Bigr\}
\]
is residual in $C_b^\infty(\mathbb R^n,\mathbb R)$.
 \end{example}

  \begin{example}[Magnetic Schr\"odinger operator on $\R^n$]\label{ex:magnetic-non-compact} 
  
Let
\[
P_{h,A,V,\mathbb R^n}
=
\sum_{j=1}^n (hD_{x_j}-A_j)^2+V(x),
\]
be as in Example \ref{ex:magnet.Sch.Neum} with $\Omega=\mathbb R^n$. It is clear that,  if \(V\ge -C\), the operator is semibounded, that  the form domain
is explicitly given by
\[
\mathcal V(\mathbb R^n)
=
\Bigl\{
u\in L^2(\mathbb R^n):
\nabla_{h,A}u\in L^2(\mathbb R^n),\ (V+C_0)^{1/2}u\in L^2(\mathbb R^n)
\Bigr\},
\]
and that $P_{h,A,V,\mathbb R^n}$ is essentially self-adjoint. We denote by $L_{\rm mag}$ its self-adjoint realisation.  If we also assume further that the embedding
\[
\mathcal V(\mathbb R^n)\hookrightarrow L^2(\mathbb R^n)
\]
is compact, then $L_{\rm mag}$ has compact resolvent.  We refer to \cite{Guibourg1993,HelfferNourrigat2019,Iwatsuka1986,KondratievShubin2002}, \cite[Theorem 1.1, Corollary 1.2]{HelfferMohamed1988} for general criteria.  
Consequently,
\[
        \mathcal G_{\rm mag,pol}
        :=
        \Bigl\{q\in C_b^\infty(\mathbb R^n,\mathbb R):
        \text{all eigenvalues of }L_{\rm mag}+M_q\text{ are simple}
        \Bigr\}
\]
is residual in \(C_b^\infty(\mathbb R^n,\mathbb R)\). In particular, if $V(x)\to +\infty$, then Persson's lemma gives $\sigma_{\mathrm{ess}}(P_{h,A,V,\mathbb R^n})=\varnothing$, and so $P_{h,A,V,\mathbb R^n}$ has compact resolvent. By Theorem \ref{thm:abstract} the set 
\[
\mathcal G_{\R^n}^{\mathrm{mag}}
:=
\Bigl\{
q\in  C_{b}^{\infty}(\R^n,\R):
\text{all eigenvalues of }P_{h,A,V,\mathbb R^n}+q\text{ are simple}
\Bigr\}
\]
is residual in $C_{b}^{\infty}(\R^n,\R)$.
 \end{example}

The sufficient condition \(V(x)\to +\infty\) in Example~\ref{ex:magnetic-non-compact}
is not necessary.  Let us recall two typical examples.
First, let \(L_{\rm S}\) be the Friedrichs realisation of
\[
        L_{\rm S}:=-\Delta_{x,y}+x^2y^2 .
\]
The potential \(x^2y^2\) does not tend to \(+\infty\) as
\(|(x,y)|\to\infty\), since it vanishes on the coordinate axes. Nevertheless,
this operator has compact resolvent; this is one of the examples first
 considered by Simon in \cite{Simon1983}.

Second, let \(L_{\rm mag}\) be the Friedrichs realisation of
\[
        L_{\rm mag}
        :=
        \left(-i\partial_x-y(x^2+y^2)\right)^2
        +
        \left(-i\partial_y+x(x^2+y^2)\right)^2 .
\]
Writing \(r^2=x^2+y^2\), and using the convention \(D+A\), this corresponds to
\(A=(-yr^2,xr^2)\). Its magnetic potential is therefore
\[
        B=\partial_x A_2-\partial_y A_1=4r^2,
\]
 so \(B\to +\infty\) as \(r\to\infty\).

\section{Additional remarks on unique continuation and compact resolvent}\label{sec:auxiliary}
The next remark records some settings in which the weak unique-continuation property holds, so that Remark~\ref{rem:localized-density} applies. It also emphasises that  the  stability of the unique continuation under  compactly supported zeroth-order perturbations is a feature of the
second-order elliptic operators and should not be assumed for non-elliptic
sums of squares.

\begin{remark}
Let \(\omega\subset X^\circ\) be open, and assume that
\[
  \omega\cap X_\alpha\neq \varnothing
  \qquad\text{for every connected component }X_\alpha\text{ of }X^\circ .
  \tag{\(C_\omega\)}
\]
When \(X^\circ\) is connected, this simply means that \(\omega\neq\varnothing\). Under condition \((C_\omega)\), the  property \((\mathrm{UCP}_\omega)\) holds for the second-order elliptic examples considered in
Examples~\ref{ex:Dir-Lapl}, \ref{ex:Neu-Lapl}, and \ref{ex:robin-arbitrary}. This is a consequence of  Aronszajn's
unique-continuation theorem; see~\cite[Remark~3, p.~248]{Aronszajn1957}.
The boundary condition plays no role in this interior argument. The same conclusion holds for the harmonic/anharmonic operators of
Example~\ref{ex:anharmonic} when  \(q(D)\) is second-order
elliptic. It also holds for the magnetic Schr\"odinger operators in
Examples~\ref{ex:magnetic-non-compact},\ref{ex:magnet.Sch.Neum}, and \ref{ex:magnetic-Dir}, after viewing the complex equation as a real
\(2\times 2\) elliptic system; see the system version~\cite[Remark~3, p.~248]{Aronszajn1957} of the unique-continuation theorem. In all these cases, adding a potential in
\(L^\infty_{\mathrm{loc}}\) only changes the lower-order terms, and so it  does not
affect the unique-continuation argument. By contrast, for the non-elliptic examples of Section~\ref{subsec:non-elliptic} one should not
infer \((\mathrm{UCP}_\omega)\) unless an independent weak
unique-continuation theorem is available for the particular operator under
consideration; Bahouri~\cite[(1.13), p.~140]{Bahouri1986} shows
that for
\[
  P_0=\partial_{x_1}^2+
  \bigl(\partial_{x_2}+x_1\partial_{x_3}\bigr)^2
\]
there exist a neighbourhood \(V\), functions \(a,u\in C^\infty(V)\), with
\(u\not\equiv 0\), such that
\[
  (P_0+a)u=0 \quad \text{in } V,
\]
and \(u\) vanishes on a non-empty open subset of \(V\). 
\end{remark}

\begin{proposition}
\label{prop:form-domain-compact-resolvent}
Let \(L\) be a self-adjoint operator on a Hilbert space \(\mathcal H\),
and assume that
\(
L\geq \gamma
\)
for some \(\gamma\in\mathbb R\). Let
\[
\mathcal Q(L)
:=
D\bigl((L+s)^{1/2}\bigr),
\qquad s>-\gamma,
\]
be the form domain of \(L\), endowed with its form norm. Then \(L\) has
compact resolvent if and only if the canonical embedding
\[
\mathcal Q(L)\hookrightarrow \mathcal H
\]
is compact.
\end{proposition}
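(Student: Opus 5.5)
The plan is to factor the resolvent through the form domain. Fix \(s>-\gamma\) and write \(A:=(L+s)^{1/2}\). This is a positive self-adjoint operator with \(A\geq (\gamma+s)^{1/2}>0\), so \(A\) is boundedly invertible. By the spectral theorem, \(\mathcal Q(L)=D(A)\), and the form norm is equivalent to \(u\mapsto\|Au\|_{\mathcal H}\). Indeed, \(\|u\|_{\mathcal Q}^2=\|Au\|^2+\|u\|^2\) and \(\|u\|\leq (\gamma+s)^{-1/2}\|Au\|\). Denote by \(J:\mathcal Q(L)\hookrightarrow\mathcal H\) the canonical embedding. By the resolvent identity, compactness of the resolvent at one point \(-s\in\rho(L)\) is equivalent to compactness at every point of \(\rho(L)\). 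So it suffices to show that \((L+s)^{-1}\) is compact if and only if \(J\) is compact.

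The key observation is that \(A^{-1}:\mathcal H\to\mathcal Q(L)\) is an isometric isomorphism for the norm \(\|A\cdot\|\). Consequently \(J\) is compact if and only if \(A^{-1}=J\circ A^{-1}\), viewed as an operator \(\mathcal H\to\mathcal H\), is compact. It therefore remains to show that \(A^{-1}\) is compact if and only if \((L+s)^{-1}=(A^{-1})^2\) is compact.

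\textbf{If \(A^{-1}\) is compact.} Then \((A^{-1})^2\) is the product of a compact operator and a bounded one, hence compact.

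\textbf{If \((A^{-1})^2\) is compact.} Use the \(C^*\)-identity for the self-adjoint operator \(B:=A^{-1}\): if \(u_n\rightharpoonup 0\) weakly, then
\[
\|Bu_n\|^2=\langle B^2u_n,u_n\rangle\leq \|B^2u_n\|\,\|u_n\|\to 0,
\]
since \(B^2\) is compact and \((u_n)\) is bounded. So \(B\) maps weakly null sequences to norm null sequences, which means \(B\) is compact. An alternative route is the spectral theorem: a positive self-adjoint operator is compact if and only if its spectral projections onto \((\varepsilon,\infty)\) have finite rank for every \(\varepsilon>0\), and \(B\) and \(B^2\) have the same such projections up to the reparametrisation \(\varepsilon\mapsto\varepsilon^2\).

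There is no real obstacle in this argument. The only step requiring care is the norm equivalence on \(\mathcal Q(L)\), and in particular verifying that \(A^{-1}\) maps \(\mathcal H\) onto \(\mathcal Q(L)\) as a topological isomorphism. This follows from the spectral calculus, because \(\|Au\|^2=\langle (L+s)u,u\rangle\) for \(u\in D(L)\) and \(D(L)\) is a core for \(A\).
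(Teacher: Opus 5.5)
Your proof is correct, but it takes a different route from the paper. The paper gives no argument of its own: it simply refers to Reed--Simon (vol.~IV, Theorem~XIII.64; see also Corollary~1.5 of LSW10). There, compactness of the form embedding appears as one item in a list of conditions shown equivalent to compactness of the resolvent. The list also includes compactness of the form sublevel sets \(\{u\in\mathcal Q(L):\|u\|\le 1,\ q_L(u)\le b\}\) and divergence of the min--max values of \(L\), so the reasoning there is essentially variational.

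You instead argue purely operator-theoretically, in two steps:
\begin{enumerate}
\item Since \((L+s)^{-1/2}\) is an isometric isomorphism from \(\mathcal H\) onto \((\mathcal Q(L),\|(L+s)^{1/2}\cdot\|)\), compactness of the embedding is the same as compactness of \(B=(L+s)^{-1/2}\) on \(\mathcal H\).
\item For the positive operator \(B\), compactness of \(B\) and of \(B^2=(L+s)^{-1}\) are equivalent, by the estimate \(\|Bu_n\|^2\le\|B^2u_n\|\,\|u_n\|\) on weakly null sequences.
\end{enumerate}

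What your route buys is self-containedness. It needs only the spectral theorem, the resolvent identity, and the weak-sequential characterisation of compact operators on a Hilbert space. It also makes transparent why it is precisely the form domain, i.e.\ the domain of the square root, that matters. The citation buys brevity and the extra equivalent characterisations, notably via min--max values, which fit with the min--max arguments used elsewhere in the paper.
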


\begin{proof}
Follows from  \cite[Theorem XIII.64, p. 245]{RS4}; see also \cite[Corollary 1.5]{LSW10}.
\end{proof}

\begin{proposition}
\label{prop:compact-resolvent-bounded-perturbations}
Let \(X\) be a smooth manifold, possibly non-compact, endowed with a positive
smooth density, and set
\(
\mathcal H:=L^2(X).
\)
Let \(\mathcal{Q}\) be an admissible perturbation space, and let
\[
L:\operatorname{Dom}(L)\subset \mathcal H\longrightarrow \mathcal H
\]
be self-adjoint and semibounded. The following assertions are equivalent:
\begin{enumerate}
    \item $L$ has compact resolvent;
    \item $L_{q_0}=L+M_{q_0}$ has compact resolvent for some $q_0\in \mathcal{Q}$;
    \item $L_q=L+M_q$ has compact resolvent for every  $q\in \mathcal{Q}$;
    \item The canonical embedding
\(
\mathcal Q(L)\hookrightarrow \mathcal H
\)
is compact, where $\mathcal Q(L)$ is the form domain of $L$. 
\end{enumerate}
Assume moreover that
\(
C_c^\infty(X^\circ,\mathbb R)\subset \mathcal Q.
\) If one, hence all, of the above equivalent conditions 
holds, then the set $\mathcal G_{\mathcal Q}$ is residual in \(\mathcal Q\).
\end{proposition}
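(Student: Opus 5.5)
The plan is to prove the cycle of implications (1)$\Rightarrow$(3)$\Rightarrow$(2)$\Rightarrow$(1), and then the equivalence (1)$\Leftrightarrow$(4). The residuality statement will follow at the end.

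\textbf{(1)$\Rightarrow$(3)$\Rightarrow$(2).} The implication (1)$\Rightarrow$(3) is Lemma~\ref{lem:bounded-perturbations} applied with $B=M_q$. This operator is bounded and self-adjoint because $\iota(q)\in L^\infty(X,\mathbb R)$ and $\|M_q\|\le\|q\|_{L^\infty}$. The implication (3)$\Rightarrow$(2) is trivial: take $q_0=0$, or any element of $\mathcal Q$.

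\textbf{(2)$\Rightarrow$(1).} Write $L=L_{q_0}+M_{-q_0}$. Since $L_{q_0}$ is self-adjoint, semibounded and has compact resolvent, and $M_{-q_0}$ is bounded and self-adjoint, Lemma~\ref{lem:bounded-perturbations} applies again with base operator $L_{q_0}$ and shows that $L$ has compact resolvent. The symmetry of the hypotheses under bounded perturbations is what makes this direction essentially free.

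\textbf{(1)$\Leftrightarrow$(4).} This is exactly Proposition~\ref{prop:form-domain-compact-resolvent} with $\mathcal H=L^2(X)$, since $L$ is self-adjoint and $L\ge\gamma$ for some $\gamma\in\mathbb R$. As a side remark, the form domain is unchanged under bounded perturbations and the form norms are equivalent (the quadratic forms differ by at most $\|q\|_{L^\infty}\|u\|^2$). One could therefore also phrase (4) with $\mathcal Q(L_q)$, which gives a second route to (2)$\Leftrightarrow$(4). This is not needed.

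\textbf{Residuality.} Assume $C_c^\infty(X^\circ,\mathbb R)\subset\mathcal Q$ and that one, hence all, of the conditions hold. Corollary~\ref{cor:automatic-splitting} gives Hypothesis~(H) for $(L,\mathcal Q)$. Condition (1) together with the standing assumptions (self-adjoint, semibounded) puts us exactly in the setting of Theorem~\ref{thm:abstract}, which gives that $\mathcal G_{\mathcal Q}$ is residual.

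The only mildly delicate point is (2)$\Rightarrow$(1). Lemma~\ref{lem:bounded-perturbations} requires the base operator to be semibounded. This holds for $L_{q_0}$ because $L_{q_0}\ge \gamma-\|q_0\|_{L^\infty}$. With that check made, no real obstacle is expected.
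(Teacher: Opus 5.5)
Your proposal is correct and follows the paper's proof essentially step by step: the cycle (1)$\Rightarrow$(3)$\Rightarrow$(2)$\Rightarrow$(1) via Lemma~\ref{lem:bounded-perturbations}, applied the second time with base operator $L_{q_0}$ and perturbation $-M_{q_0}$; (1)$\Leftrightarrow$(4) via Proposition~\ref{prop:form-domain-compact-resolvent}; and residuality via Corollary~\ref{cor:automatic-splitting} and Theorem~\ref{thm:abstract}. Your check that $L_{q_0}\ge\gamma-\|q_0\|_{L^\infty}$ is the same observation the paper makes, and the self-adjointness of $L_{q_0}$ needed there follows from the Kato perturbation theorem cited in that lemma's proof, which does not use compactness of the resolvent.
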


\begin{proof}
Assume first that \(L\) has compact resolvent. Let \(q\in \mathcal Q\). Since
\(
q\in \mathcal Q\subset L^\infty(X,\mathbb R),
\)
the  operator \(M_q\) is bounded and self-adjoint on
\(\mathcal H\). Hence, by Lemma~\ref{lem:bounded-perturbations}, $L_q$ is also 
self-adjoint, bounded from below, and has compact resolvent. Thus \textit{1.} $\Rightarrow$ \textit{3.}, and \textit{3.} $\Rightarrow$ \textit{2.} is immediate. Conversely, assume that \textit{2.} holds.  Since \(L\) is semibounded and $M_{q_0}$ is 
bounded, \(L_{q_0}\) is also bounded from below. Moreover, we can write 
\(
L=L_{q_0}-M_{q_0},
\)
and since \(-M_{q_0}\) is bounded and self-adjoint on \(\mathcal H\), another
application of Lemma~\ref{lem:bounded-perturbations} shows that \(L\) has compact resolvent. Therefore \textit{2.} $\Rightarrow$ \textit{1.} The equivalence between \textit{1.} and \textit{4.} follows from Proposition \ref{prop:form-domain-compact-resolvent}. Finally, under the extra assumption that $C_c^\infty(X^\circ,\mathbb R)\subset \mathcal Q$, the  conclusion follows from Corollary~\ref{cor:automatic-splitting} and Theorem~\ref{thm:abstract}.
\end{proof}

\medskip

We point out that the compact-resolvent assumption of the abstract Theorem \ref{thm:abstract} is not essential
for the generic-simplicity mechanism itself; if one restricts attention to a
spectral region where the spectrum is discrete, consists of isolated
eigenvalues of finite multiplicity, and is stable under the admissible
perturbations, the same Baire-category argument applies. The following example on the spectrum of the Coulomb Hamiltonian, see e.g. \cite{Teschl2009}, illustrates this point.

\begin{example}
Let \(\gamma>0\), and let \(L_\gamma\) be the Friedrichs extension on
\(L^2(\mathbb R^3)\) associated with
\[
        -\Delta-\frac{\gamma}{|x|}.
\]
Let
\(
        \mathcal{Q} :=
        \left\{
        q\in C_b^\infty(\mathbb R^3,\mathbb R):
        q(x)\to0 \text{ as } |x|\to\infty
        \right\},
\)
endowed with the topology induced by \(C_b^\infty(\mathbb R^3,\mathbb R)\).
Then \(\mathcal{Q}\) is an admissible perturbation space and
\(
        C_c^\infty(\mathbb R^3,\mathbb R)\subset \mathcal{Q}.
\)
For \(q\in \mathcal{Q}\), set
\(
        L_{\gamma,q}:=L_\gamma+M_q .
\)
Then the set
\[
        G_\gamma :=
        \left\{
        q\in \mathcal Q:
        \text{ every eigenvalue of }L_{\gamma,q}
        \text{ in }(-\infty,0)
        \text{ is simple}
        \right\}
\]
is residual in \(\mathcal{Q}\). 

Indeed, for \(q\in \mathcal Q\), write
\(
        L_{\gamma,q}
        =
        -\Delta+a_{\gamma,q}(x),\)
         where \(
        a_{\gamma,q}(x):=
        -\frac{\gamma}{|x|}+q(x).
\) 
Since $ a_{\gamma,q}(x)\to0$ as $|x| \to \infty$ and $        a_{\gamma,q}\in L^2_{\mathrm{loc}}(\mathbb R^3),$  by Persson's
 \cite[Theorem~2.1, Proposition 3.1]{Persson1960},  the bottom of the essential spectrum is $0$, and hence for every $\alpha<0$ the spectral part of \(L_{\gamma,q}\) in $(-\infty,\alpha]$ is finite and consists of isolated eigenvalues of finite multiplicity.
In particular, for every \(N\geq1\), the interval
\(
        (-\infty,-1/N]:=(-\infty, \alpha_N]
\)
contains only finitely many eigenvalues of \(L_{\gamma,q}\), counted with
multiplicity. Define
\[
        G_{\gamma,N}
        :=
        \left\{
        q\in \mathcal Q:
        \text{ every eigenvalue of }L_{\gamma,q}
        \text{ in }(-\infty,\alpha_N]
        \text{ is simple}
        \right\}.
\]
Then
\[
        G_\gamma=\bigcap_{N\geq1}G_{\gamma,N},
\]
and it remains to prove that each \(G_{\gamma,N}\) is open and dense in \(\mathcal Q\). We first prove openness.  Fix \(p\in G_{\gamma,N}\).  Since the spectrum
below \(0\) is discrete, we may choose
\[
        \beta\in(\alpha_N,0)
        \quad\text{such that}\quad
        \sigma(L_{\gamma,p})\cap(\alpha_N,\beta]=\varnothing .
\]
The spectral part of \(L_{\gamma,p}\) below \(\beta\) is finite, and the
eigenvalues in \((-\infty,\alpha_N]\) are simple.  By the same min--max
estimate as in Lemma~\ref{lem:lipschitz}, applied to this finite spectral part below
\(\beta\), these eigenvalues remain separated for all \(q\) sufficiently
close to \(p\) in \(L^\infty(\mathbb R^3)\).  Moreover, by choosing \(q\)
sufficiently close to \(p\), no spectral point lying above \(\beta\) can
enter the interval \((-\infty,\alpha_N]\).  Hence
\(
        q\in G_{\gamma,N}.
\)
Since the inclusion
\(
        Q\hookrightarrow L^\infty(\mathbb R^3,\mathbb R)
\)
is continuous, \(G_{\gamma,N}\) is open in \(\mathcal Q\).

We now prove density.  Fix \(p\in \mathcal Q\), let \(U\) be a neighborhood of
\(p\) in \(\mathcal Q\), and choose
\[
        \beta\in(\alpha_N,0)
        \quad\text{such that}\quad
        \beta\notin\sigma(L_{\gamma,p}).
\]
The spectral part of \(L_{\gamma,p}\) in \((-\infty,\beta)\) is finite. Since
\(
        C_c^\infty(\mathbb R^3,\mathbb R)\subset \mathcal Q,
\)
Corollary~\ref{cor:automatic-splitting} gives Hypothesis~{\rm (H)} for every multiple eigenspace of
\(L_{\gamma,p}\).  We can therefore repeat the argument of
Lemmas~\ref{lem:reduce_mult_abstract} and~\ref{lem:dense} on the finite spectral part below \(\beta\).  The
only change is that Proposition~\ref{lem:kato} is used only in its local form: if
\(\lambda<\beta\) is an isolated eigenvalue of finite multiplicity, then
the eigenvalues near \(\lambda\) arising from \(\lambda\) admit real-analytic
branches under  bounded potential perturbations; see
Kato~\cite[Chapter VII, \S3.2, p.~387]{Kato}.

Thus, by the same  argument as in Lemma~\ref{lem:reduce_mult_abstract}, the multiplicity
of a multiple eigenvalue below \(\beta\) can be reduced by an arbitrarily
small perturbation in \(\mathcal Q\).  Since there are only finitely many
eigenvalues below \(\beta\), repeating this finitely many times gives
some
\(
        q\in U
\)
such that every eigenvalue of \(L_{\gamma,q}\) below \(\beta\) is simple.
Since
\(
        (-\infty,\alpha_N]\subset(-\infty,\beta),
\)
we have
\(
        q\in U\cap G_{\gamma,N},
\)
and so  \(G_{\gamma,N}\) is dense in \(\mathcal Q\).

Summarising, \(G_{\gamma,N}\) is open and dense in \(\mathcal{Q}\) for every \(N\geq1\).
Since \(\mathcal Q\) is a Fr\'echet space, it is a Baire space.  Hence
\[
        G_\gamma
        =
        \bigcap_{N\geq1}G_{\gamma,N}
\]
is residual in \({\mathcal Q}\).
\end{example}

The Coulomb term is only a model case: it may be replaced by any real-valued \(V\in L^2(\mathbb R^3)+(L^\infty(\mathbb R^3))_{\varepsilon}\), i.e. by a potential for which there exist \(V_n\in L^2(\mathbb R^3)\) with \(V-V_n\in L^\infty(\mathbb R^3)\) and \(\|V-V_n\|_{\infty}\to0\).
For such \(V\), the operator \(V(-\Delta+1)^{-1}\) is compact, see e.g. \cite[Example~6, p. 117]{RS4}, hence \(\sigma_{\mathrm{ess}}(L_V)=[0,\infty)\), where \(L_V\) is the self-adjoint realization of \(-\Delta+V\); setting \(L_{V,q}:=L_V+M_q\), \(q\in \mathcal Q\), the same conclusion holds.

\section*{Acknowledgements}
\addcontentsline{toc}{section}{Acknowledgements}

We thank Luc Hillairet for helpful discussions, for his feedback on this work, and for communicating
the paper \cite{HarakehHillairet}. We also thank Irene
Silvestre Rosello for preliminary discussions on the subject, and Pier Domenico Lamberti for communicating the paper \cite{LZ}.

\section*{Data availability}
No data was used for the research described in the article.

\end{document}